\theoremstyle{plain}
\newtheorem{theorem}{Theorem}[section]
\newtheorem{corollary}{Corollary}[section]
\theoremstyle{definition}
\newtheorem{example}{Example}
\newtheorem{remark}{Remark}
\newtheorem{assumption}{Assumption}
\newcommand{\bx}{{\bf x}}
\newcommand{\um}{\underline{m}}
\newcommand{\bF}{{\bf F}}
\newcommand{\bbJ}{{\mathcal J}}
\newcommand{\bbS}{{\bf S}}
\newcommand{\bY}{{\bf Y}}
\newcommand{\by}{{\bf y}}
\newcommand{\bX}{{\bf X}}
\newcommand{\gS}{{\bm \Sigma}}
\newcommand{\rE}{{\rm E}}
\newcommand{\md}{{\rm d}}
\newcommand{\non}{\nonumber\\}
\begin{document}

\begin{frontmatter}
\title{Inference on  testing  the number of spikes in a   high-dimensional generalized  spiked Fisher matrix}

\runtitle{Inference on testing  the number of spikes}

\begin{aug}
\author{\fnms{Rui} \snm{ Wang}\thanksref{1}\ead[label=e1]{wangrui\_math@stu.xjtu.edu.cn}}
\and
\author{\fnms{Dandan} \snm{Jiang}\thanksref{1,t1}\ead[label=e2]{jiangdd@mail.xjtu.edu.cn}}

\address[1]{School of Mathematics and Statistics, Xi'an Jiaotong University, Xi'an, China\\
\printead{e1,e2}}



\thankstext{t1}{Corresponding author: Dandan Jiang.}
\runauthor{R. Wang and D. Jiang}

\end{aug}

\begin{abstract}
	The spiked Fisher matrix is a significant topic for two-sample problems in multivariate statistical inference. This paper is dedicated to testing the number of spikes in a high-dimensional generalized spiked Fisher matrix that relaxes the Gaussian population assumption and the diagonal constraints on the population covariance matrices.
	First, we propose a general test statistic predicated on partial linear spectral statistics to test the number of spikes, then establish the central limit theorem (CLT) for this statistic under the null hypothesis. Second, we apply the CLT to address two  statistical problems: 
	variable selection in high-dimensional linear regression and change point detection. 
	For each test problem, we construct new statistics and derive their asymptotic distributions under the null hypothesis. Finally, simulations and empirical analysis are conducted to demonstrate the remarkable effectiveness and   generality of our proposed methods  across various scenarios.
\end{abstract}

\begin{keyword}[class=MSC]
\kwd[Primary ]{62H15}
\kwd[; secondary ]{62H10}
\end{keyword}

\begin{keyword}
\kwd{High-dimensional data}
\kwd{ generalized spiked Fisher matrix}
\kwd{testing the number of  spikes}
\kwd{central limit theorem}
\end{keyword}
\tableofcontents
\end{frontmatter}

\section{Introduction}

The spiked model, initially introduced by \cite{Johnstone2001},  describes  that some extreme eigenvalues of random matrices exhibit significant separation from the remainder. This model plays a pivotal role in multivariate statistical inference and has a wide range of applications in many modern fields, such as wireless communication \cite{Telatar1999, TOSE2022}, speech recognition \cite{Hastie1995, Johnstone2001}, and other scientific fields.
A critical aspect of  the spiked model is to determine the number of spiked-eigenvalues (spikes), which helps reveal the latent dimensionality of data and the underlying structure of the population. 
Notably, due to the close relationship between the spiked model and  principal component analysis (PCA), and factor analysis (FA), determining the number of principal components (PCs) or factors is  equivalent to finding the number of spikes. 

Numerous studies have been devoted to this topic in one-sample spiked population models.   One prominent type of research focuses on information criteria, such as \cite{BN2002}, which developed a constructive work to  provide several information criteria for estimating the number of factors. Following their work, \cite{Forni2009, Aless2010, Pass2017} et al. improved these criteria using different tools. 
Another main type of research relies on the asymptotic behavior of eigenvalues. The reference \cite{KN2008} estimated the number of PCs  by comparing the sample eigenvalues  against a threshold derived from the Tracy-Widom Law. 
Alternatively, \cite{PY2012} and \cite{PY2014} developed approaches based on differences in consecutive sample eigenvalues.
More recently, \cite{Jiang2022} proposed a partial linear spectral statistic to test the number of spikes in generalized spiked covariance matrices. Despite these advancements, the studies on determining the number of spikes for two-sample spiked models  has  received less attention.

The two-sample spiked model addresses the problems involving two  $p$-dimensional populations with covariance matrices $\mathbf \Sigma_1$ and $\mathbf \Sigma_2$. 
When the matrix $\gS_1\gS_2^{-1}$ possesses a few extreme eigenvalues that are significantly distinct from the remaining ones,
the Fisher matrix $\mathbf S_1\mathbf S_2^{-1}$ is called the spiked Fisher matrix, where $\mathbf S_1$ and $\mathbf S_2$ are the corresponding sample covariance matrices. 
Determining the number of spikes in such a two-sample spiked model was  initially studied in  \cite{wang2017}
with a simple formation of
\begin{align}\label{sigma1}
	\gS_1=\mathbf \gS_2+\mathbf\Delta,
\end{align} 
where  $\mathbf\Delta$ is a symmetric matrix with low rank. In their study, \cite{wang2017}  compared the sample eigenvalues against a threshold in a similar way to \cite{KN2008}.
However, when $\gS_1\gS_2^{-1}$ satisfies the condition in \eqref{sigma1}, it inherently has equal non-spiked eigenvalues of 1, thereby restricting its applications to more complex covariance structure. Subsequently, motivated by the hypothesis testing problem $\mathcal{H}_{0}: \mathbf\Sigma_1= \sigma^2\mathbf\Sigma_2 \ \text{v.s.} \ \mathcal{H}_{1}:  \mathbf\Sigma_1=\sigma^2\mathbf\Sigma_2+\mathbf\Delta$, \cite{zeng2022}  explored a more general spiked Fisher matrix, where the  non-spiked eigenvalues are still assumed to be identical. They introduced a generic criterion to estimate the number of spikes, later extended by \cite{zeng2023} to scenarios where the number of spikes diverges as $p$ goes to infinity. 
Nevertheless, all of these models are unsuitable for more complex structures characterized by varying non-spiked eigenvalues.
 This leads to the  study of the so-called generalized  spiked Fisher matrix, as investigated in \cite{jiang2021} and \cite{Jiang2023}, where $\gS_1\gS_2^{-1}$  has the spectral form  
\begin{equation} \label{spec}
	\underbrace{\alpha_1, \cdots, \alpha_1}_{m_1}, \ldots, \underbrace{\alpha_K, \cdots, \alpha_K}_{m_K}, \beta_{p, M+1}, \cdots, \beta_{p, p}.
\end{equation}
Here, the spectrum in \eqref{spec} is arranged in descending order,  where $\alpha_1, \alpha_2,\cdots, \alpha_K$ are  spikes with multiplicity $ m_1,m_2,\cdots, m_K$, satisfying $m_1+m_2+\cdots+ m_K= M$ with $M$ being a fixed integer. These spikes are significantly larger than the remaining non-spiked eigenvalues $\beta_{p, M+1}, \cdots, \beta_{p, p}$.

In this paper, we focus on testing the number of spikes in a generalized spiked Fisher matrix, as described in  \eqref{spec}, when both the sample size and the dimension of variables proportionally tend to infinity. We first propose a general test statistic that relies on a partial linear spectral statistic, and establish its asymptotic normality under the null hypothesis.  This test is then applied to two statistical problems: testing the number of significant variables in a large-dimensional linear regression model and testing the change points in the sequence data.
Our approach offers several notable advantages compared to existing works.
Firstly, our method exhibits superior generality as it is free of population distributions of samples, and does not rely on the diagonal assumption of the population covariance matrices. 
Furthermore, the proposed method is more comprehensive with general assumptions of $\mathbf \Sigma_1$ and $\mathbf \Sigma_2$, and encompasses the low-rank case of $\mathbf \Sigma_1-\mathbf \Sigma_2$ as a special instance.
Finally, beyond theoretical advancements, our practical applications demonstrate commendable performance. In the variable selection problem for large-dimensional linear regression, our test accurately identifies the number of significant variables, exhibiting correct sizes and robust behavior. Additionally, in the change point detection problem,  our method consistently achieves higher accuracy across various scenarios compared to existing approaches.

The arrangement of this article is as follows. Section \ref{sec2} introduces the generalized spiked Fisher matrix, proposes a test for the number of spikes, and derives its asymptotic normality under the null hypothesis. Following this, Section \ref{sec3} and Section \ref{sec4} elaborate on two practical applications of the proposed test, the identification of the number of significant variables in the large-dimensional linear regression model and the  change point detection. Section \ref{sec5} conducts numerical and empirical studies for the above results. Finally, Section \ref{sec6} offers concluding remarks. Proofs are provided in the appendix.

\section{Testing the  number of spikes for a generalized spiked Fisher matrix}\label{sec2} 
\subsection{Model formulation }
Following \cite{jiang2021},  the generalized spiked Fisher matrix 
serves as the primary target matrix for the statistical problems within the two-sample spiked model, which is formulated as follows. 
Let $\gS_1$ and $\gS_2$ be two general independent and deterministic  matrices, where $\gS_1$ is non-negative definite, and $\gS_2$ is positive definite.  To begin with, we define 
\begin{align*}
	\bX&=(\bx_1,\bx_2,\cdots,\bx_{n_1})=(x_{ij}, 1\leq i\leq p,1 \leq j\leq n_1),\\
	\bY&=(\by_1,\by_2,\cdots,\by_{n_2})=(y_{il}, 1\leq i\leq p,1 \leq l\leq n_2),
\end{align*}
where $\{x_{ij},i,j = 1,2,...\} $ and  $\{y_{il},i,l = 1,2,...\} $ are either both real or both complex random variable arrays,  satisfying the following assumption.
\begin{assumption} \label{assum1}
	The two independent double arrays $\{x_{ij},i,j = 1,2,...\} $
	and  $\{y_{il},i,l = 1,2,...\} $ are independent and identically distributed (i.i.d.) random variables with mean 0 and variance 1. 
	$\rE (x_{ij}^2)=\rE (y_{ij}^2)=0$ for the complex case. Moreover, $\rE|x_{ij}|^4 <\infty$ and $\rE|y_{ij}|^4 <\infty$.
\end{assumption} 	
Given this assumption, $\gS_1^{1/2}\bX$ and $\gS_2^{1/2}\bY$ can be seen as two independent random samples from the $p$-dimensional populations  with general population  covariance matrices $\gS_1$ and $\gS_2$, respectively.
The  corresponding  sample covariance matrices of the two observations are  given by
\begin{equation}
	{\bbS}_1=\frac{1}{n_1}\gS_1^{\frac{1}{2}}
	{\bX}{\bX}^*{\gS}_1^{\frac{1}{2}}  \quad \text{and} \quad   
	{\bbS}_2=\frac{1}{n_2}{\gS}_2^{\frac{1}{2}}
	{\bY}{\bY}^*{\gS}_2^{\frac{1}{2}}.\label{S12}
\end{equation}
Then,  the Fisher matrix is defined as
\begin{equation}
	\bF=\bbS_1\bbS_2^{-1}, \label{F_def}
\end{equation}
which shares the same non-zero eigenvalues as the matrix 
\begin{equation}
	{\mathbf F}={\mathbf R}_p^{*}\widetilde {\mathbf S}_1{\mathbf R}_p\widetilde {\mathbf S}_2^{-1}.\label{F}
\end{equation}
Here, ${\mathbf R}_p={\boldsymbol\Sigma}_1^{1/2}{\boldsymbol\Sigma}_2^{-{1/2}}$,   $\widetilde {\mathbf S}_1=n_1^{-1}{\mathbf X}{\mathbf X}^*$ and $\widetilde {\mathbf S}_2={n_2}^{-1}{\mathbf Y}{\mathbf Y}^*$  are the standardized sample covariance matrices. Thus, if there is no confusion, we denote the matrix  in (\ref{F}) as the Fisher matrix $\bF$ and study it from this point onward.

The eigenvalues of ${\mathbf R}_p^*{\mathbf R}_p={\boldsymbol\Sigma}_2^{-{1/2}}{\boldsymbol\Sigma}_1{\boldsymbol\Sigma}_2^{-{1/2}}$ are equivalent to those in the matrix ${\boldsymbol\Sigma}_1{\boldsymbol\Sigma}_2^{-1}$. When the difference between $\boldsymbol\Sigma_1$ and $\boldsymbol\Sigma_2$ is general and does not adhere to the low-rank  assumption, the spectrum of ${\mathbf R}_p^*{\mathbf R}_p$ aligns with the form specified in \eqref{spec} and can be rewritten in  descending order as
$$
	\beta_{p, 1}, \beta_{p, 2}, \cdots, \beta_{p, p}.
$$
Let  $\beta_{p,j}=\alpha_k, j \in  \bbJ_k$, where  $\bbJ_k$ denotes the set consisting of the ranks of the $ m_k$-ple eigenvalue  $\alpha_k$. Then, $\alpha_1,\alpha_2, \cdots, \alpha_K$ are  the population  spiked eigenvalues with respective multiplicity $ m_k, k=1,2,\cdots,K$,   satisfying $ m_1+m_2+\cdots+ m_K= M$, where $M$ is a fixed integer. 
The rest $\beta_{p, M+1}, \ldots, \beta_{p, p} $ are non-spiked eigenvalues.
Then, we call  $\bF$ defined in (\ref{F})  as the so-called generalized spiked Fisher matrix by assuming that these spikes are much larger than the remaining eigenvalues. 

Throughout the paper, the empirical spectral distribution (ESD) of a  $p\times p$ matrix $\mathbf A$, with eigenvalues denoted by $\{l_i(\mathbf A)\}$, is defined as the probability measure $p^{-1}\sum_{i=1}^{p}\delta_{\{l_i(\mathbf A)\}}$, where  $\delta_{\{a\}}$ denotes the Dirac mass at a point $a$.  For a given sequence of	random matrices $\{\mathbf A_n\}$, the  ESD  of $\{\mathbf A_n\}$ converges, in a sense to be precise and when $n\rightarrow \infty$, to a limiting spectral distribution (LSD). To proceed further, we consider some regular assumptions in  random matrix theory.
\begin{assumption} \label{assum2}
	 Assuming that $c_{n_1}=p/n_1\rightarrow c_1\in (0, \infty)\quad \text{and} \quad  c_{n_2} =p/n_2 \rightarrow c_2\in (0,1)$ 
as  $\min( p,n_1,n_2)\rightarrow \infty$.
\end{assumption}

\begin{assumption} \label{assum3}
	The matrix  ${\mathbf R}_p$ is a deterministic and  non-negative
	definite Hermitian matrix, with all its eigenvalues bounded. 
	 Moreover, the  ESD $ H_n(t)$ of   ${\mathbf R}_p^*{\mathbf R}_p$,  generated by the population eigenvalues $\{\beta_{p,j}, j=1,2,\cdots,p\}$,  tends to a proper probability measure $ H(t)$  as $\min( p,n_1,n_2)\rightarrow \infty$. 
\end{assumption}

For the generalized spiked Fisher matrix in \eqref{F}, let its sample eigenvalues be denoted as $l_{1}(\bF) \geq l_{2}(\bF)\geq\cdots\geq l_{p}(\bF)$. To apply random matrix theory to hypothesis testing problems, we need to establish the central limit theory (CLT) of linear spectral statistics composed of these eigenvalues. To this end, we first focus on their first-order limits. Particularly, for the  sample spiked eigenvalues $\{l_{j}(\bF), j \in \bbJ_k\}$, $k=1,2,\ldots,K$ associated with the population spiked eigenvalue $\alpha_k$, \cite{jiang2021} obtained  the following relationship  under the separation condition $\min_{i\neq k}|\alpha_k/ \alpha_i -1| > d$ for some constant $d>0$:
\begin{equation}
	\frac{l_{j}(\bF)}{\psi_k}-1 \to 0, a.s.\quad ~  j \in  \bbJ_k, 
	\label{lpsi1}
\end{equation}
 where $\psi_k$ is a function with respect to $\alpha_k$, specifically expressed as
\begin{align}\label{psik}
	\psi_k:=\psi( \alpha_k)=\frac{\alpha_k\left(1-c_1\int \displaystyle\frac{t}{t-\alpha_k}{\rm d}H(t)\right)}{1+c_2\int\displaystyle\frac{\alpha_k}{t-\alpha_k}{\rm d}H(t)}.
\end{align}

\subsection{Test on the number of spikes: the general case}
To test  the number of  spikes in a generalized   spiked  Fisher matrix, we focus on the following hypothesis 
\begin{equation}
	\mathcal{H}_{0}:  M=  M_0 \quad \mbox{v.s.} \quad \mathcal{H}_{1}: M \neq   M_0,
	\label{H1}
\end{equation}
where $M_0$ is the true number of  spikes in ${\mathbf R}_p^*{\mathbf R}_p$.

Motivated by a classic likelihood ratio test statistic for testing the number of spikes in a one-sample spiked covariance matrix, we are going to propose a similar statistic for the generalized spiked Fisher matrix.
More specifically,  it is well-known that the  likelihood ratio  test statistic $-2\log L$ is often used for the probability PCA  model, where
\begin{align*}
	L=\left( \frac{(\prod_{i=M+1}^{p}l_i)^{\frac{1}{p-M}}}{\sum_{i=M+1}^{p}l_i/(p-M)}\right)^{\frac{(p-M)n}{2}}. 
\end{align*}	
It can be found that $-2\log L$ primarily depends on $\sum_{i=M+1}^{p}l_i$ and  $\sum_{i=M+1}^{p}\log(l_i)$.
Inspired by this, for the hypothesis \eqref{H1} in the  generalized spiked  Fisher matrix,  we proposed the  test statistic
\begin{align}\label{newtest2}
	\sum_{j=1}^p  f\{l_j(\bF)\}-\sum_{j \in  \bbJ_k, k=1}^K  f\{ l_j(\bF)\}.
\end{align}
Here, $f \in  \mathcal{ A}$,  and $\mathcal{ A}$  represents a set of analytic functions defined on an open set of the complex plane containing  the supporting set of the LSD of the matrix $\mathbf F$. In statistical applications, the form of $f(x)$ varies from case to case, depending on the particular problem under consideration.

There has been some related work for the limit behavior of the sample eigenvalues of $\mathbf F$.
For the linear spectral statistic $\sum_{j=1}^p  f\{l_j(\bF)\}$, \cite{zheng2017} developed its CLT  applicable when the general Fisher matrix has no spikes, while \cite{jiang2021} focused on the limit of sample spiked eigenvalues  $l_j(\bF), j \in \bbJ_k, k=1,2,\ldots, K$.
However, neither \cite{zheng2017} nor \cite{jiang2021} can address the problem of testing the number of spikes individually. 
Therefore, we aim to establish the CLT of the partial linear spectral statistic in \eqref{newtest2} under the null, offering a fresh perspective on this statistical problem.

To develop the asymptotic theory of the  statistic in \eqref{newtest2}, we  first introduce some intermediate notations. Let $F^{(c_1,c_2,H)}$ represent the LSD of  the Fisher matrix $\bF$, and $F^{(c_{n1},c_{n2},H_n)}$  be the counterpart of $F^{(c_1,c_2,H)}$  with the substitution of $ (c_1,c_2, H)$ with  $ (c_{n1},c_{n2}, H_n)$.
Moreover, we denote $m_{c_1,c_2}(z)$ as the Stieltjes transform of $F^{(c_1,c_2,H)}$, and $\um_{c_1,c_2}(z) =-(1-c_1)z^{-1}+c_1m_{c_1,c_2}(z)$. For brevity, the notation $\um_{c_1,c_2}(z)$ will be simplified to  $\um(z)$.
Additionally, let $\um_{c_2}(z)$ satisfy  the equation 
$z=-\{\um_{c_2}(z)\}^{-1}+c_2 \int \{t+\um_{c_2}(z)\}^{-1} \md  H (t)$.
We further define  $h^2=c_1+c_2-c_1c_2$ and $m_0(z)=\um_{c_2}\{-\um(z)\}$. 
Then, the CLT  of the  statistic in \eqref{newtest2} under the null hypothesis  is established as follows,  with its proof provided in Appendix \ref{app1}. 
\begin{theorem}\label{Th1}
	For the hypothesis testing problem \eqref{H1},    suppose that  Assumptions~\ref{assum1} to \ref{assum3} are satisfied. Then, under the null,   the 
	test statistic \eqref{newtest2} follows 
	\begin{align}
		T_{f,H}&=\nu_{ f,H}^{-\frac{1}{2}}\left\{\sum\limits_{j=1}^p  f\{l_j(\bF)\}\!-\!\!\sum\limits_{j \in  \bbJ_k, k=1}^K  f\{ l_j(\bF)\}
		\!-\!d_1( f,  H_n)\!+\!d_2( f,\alpha,  H)\!-\! \mu_{ f,H}\right\} \notag\\
		&\Rightarrow \mathcal{N} \left( 0,
		1\right), \label{asymp1}
	\end{align}
	where 
\begin{align}
	&d_1( f,  H_n)=p\int  f(x)  {\rm d}  F^{(c_{n1},c_{n2},H_n)}(x),\label{d1}\\
	&d_2( f,\alpha_k,  H)=\sum\limits_{k=1}^K m_k  f\left(\psi_k\right).\notag
\end{align}
The mean and variance terms, $\mu_{ f,H}$ and $\nu_{f,H}$, are functions of $m_0$ and their expressions can be found  in the equations \eqref{mean} and \eqref{var}.
\end{theorem}

In Theorem \ref{Th1}, a notable distinction from the CLT in \cite{zheng2017} is the computation of the centering parameter $d_1(f,H_n)$ in \eqref{d1}, whose expression becomes more intricate for spiked Fisher matrices.
When the function $f(x)$ involved in test statistics and the population ESD $H_n(t)$ have simple forms, explicit expressions for the center term can  be computed, as demonstrated in the following section. 
However,  when $f(x)$ and $H_n(t)$ become complex, the analytical expressions are not readily available.
Similar to Remark 4.1 in \cite{zheng2017}, we propose a numerical procedure in the supplementary material to approximate  $d_1(f,H_n)$
for generalized spiked Fisher matrices, thereby facilitating practical implementation and enhancing the applicability of our approach.

\subsection{The low-rank case of $\mathbf \Sigma_1-\mathbf \Sigma_2$}
Recall the  spiked Fisher matrix adhering to the formula \eqref{sigma1}, the difference $\mathbf \Delta$ between $\boldsymbol\Sigma_1$ and $\boldsymbol\Sigma_2$ is  of low rank.
Thus, the spectrum of ${\mathbf R}_p^*{\mathbf R}_p$  is arranged  in  descending order as
$$
\beta_{p, 1},\beta_{p, 2}, \cdots,\beta_{p, M},1 \cdots, 1.
$$
Here,  $\beta_{p,j}=\alpha_k, j \in  \bbJ_k$,  where $\alpha_1,\alpha_2, \cdots, \alpha_K$ are  spikes with multiplicity $ m_1,m_2,\cdots, m_K$,  satisfying $ m_1+m_2+\cdots+ m_K= M$.  Within this framework, the LSD of $\mathbf R^*_p\mathbf R_p$  is $H(t)=\delta_{\{1\}}$.
Therefore, the CLT in Theorem \ref{Th1} can be  rewritten as the following result, where the mean and variance terms  take the form of contour integrals along the unit circle,  as demonstrated in \cite{zheng2012}.
\begin{corollary}\label{coro1}
	For the testing problem \eqref{H1},    suppose that  Assumptions~\ref{assum1} to \ref{assum3} and the condition $\mathbf\Sigma_1=\mathbf\Sigma_2+\mathbf \Delta$ are satisfied,  
	then the asymptotic  distribution  of the 
	test statistic \eqref{newtest2} can be expressed as
	\begin{align} \label{asymp2}
		T_{f,1}&=\nu_{ f,1}^{-\frac{1}{2}}\left\{\sum\limits_{j=1}^p  f\{l_j(\bF)\}\!-\!\!\sum\limits_{j \in  \bbJ_k, k=1}^K  f\{ l_j(\bF)\}
		\!-\!d_1( f,  H_n)\!-\! \mu_{ f,1}\right\} \Rightarrow \mathcal{N} \left( 0,
		1\right),
	\end{align}
	where 
	\begin{align*}
		&d_1( f,  H_n)=p\int  f(x)  {\rm d}  F^{(c_{n1},c_{n2},H_n)}(x)-\sum\limits_{k=1}^K m_k  f\left(\frac{\alpha_k(1-\alpha_k-c_1)}{1-\alpha_k+c_2\alpha_k}\right), \\
		&\mu_{f,1}=\lim _{r \downarrow 1}\! \frac{q}{4 \pi \mathrm{i}} \!\oint_{|\xi|=1}\! f\left(\!\frac{1+h^2+2 h \Re(\xi)}{\left(1-c_2\right)^2}\!\right)\! \left[\frac{1}{\xi-r^{-1}}\!+\!\frac{1}{\xi+r^{-1}}\!-\!\frac{2}{\xi+c_2/h}\right]\!\mathrm{d} \xi \notag \\
		&\qquad\quad+\frac{\beta_x  c_1\left(1-c_2\right)^2}{2 \pi \mathrm{i} h^2} \oint_{|\xi|=1} f\left(\frac{1+h^2+2 h  \Re(\xi)}{\left(1-c_2\right)^2}\right)\frac{1}{\left(\xi+c_2 / h\right)^3} \mathrm{~d} \xi \notag \\
		&\qquad\quad+\frac{\beta_y \left(1-c_2\right)}{4 \pi \mathrm{i}} \oint_{|\xi|=1} f\left(\frac{1+h^2+2 h \Re(\xi)}{\left(1-c_2\right)^2}\right)\frac{\xi^2-c_2 / h^2}{\left(\xi+c_2 / h\right)^2} \notag\\
		&\qquad\quad\quad \cdot
		\left[\frac{1}{\xi-\sqrt{c_2} / h}+\frac{1}{\xi+\sqrt{c_2} / h}-\frac{2}{c_2 / h}\right] \mathrm{d} \xi,
	\end{align*}
	and
	\begin{align*}
		\nu_{f,1}
		=&-\lim _{r \downarrow 1} \frac{q+1}{4 \pi^2} \oint_{\left|\xi_1\right|=1}\! \oint_{\left|\xi_2\right|=1}\!f\left(\!\frac{1+h^2+2 h \Re\left(\xi_1\!\right) }{\left(1-c_2\right)^2} \right) \!f\left(\!\frac{1+h^2+2 h \Re\left(\xi_2\right) }{\left(1-c_2\right)^2 }\! \right) \notag\\
		& \quad\cdot \frac{1}{\left(\xi_1-r \xi_2\right)^2}
		\mathrm{~d} \xi_1 \mathrm{~d} \xi_2 \notag \\
		&-\frac{\left(\beta_x c_1+\beta_y c_2\right)\left(1-c_2\right)^2}{4 \pi^2 h^2}\! \oint_{\left|\xi_1\right|=1} \!f\left(\!\frac{1+h^2+2 h \Re\left(\xi_1\right) }{\left(1-c_2\right)^2}\! \right) \!\frac{1}{\left(\xi_1\!+\!c_2 / h\right)^2}\! \mathrm{~d} \xi_1 \notag \\
		& \quad \cdot \oint_{\left|\xi_2\right|=1} f\left(\frac{1+h^2+2 h \Re\left(\xi_2\right) }{\left(1-c_2\right)^2} \right) \frac{1}{\left(\xi_2+c_2 / h\right)^2} \mathrm{~d} \xi_2.
	\end{align*}
	Here, $\Re\left(z\right)$ represents the real part of $z$, $\beta_x={\rE|x_{11}|^4-q-2}$ and $\beta_y={\rE|y_{11}|^4-q-2}$ with $q=1$ for real case and $q=0$ for complex. 
\end{corollary}

As  mentioned in the likelihood ratio statistics $-2\log L$, we  select functions $f(x) = x$ and $f(x) = \log x$ as two examples to  calculate   more concise results for Corollary \ref{coro1}   when $H(t) =\delta_{\{1\}}$.
The  calculation details are presented in the supplementary material.
\begin{example} \label{ex1}
	If $f(x)=x$ and $H(t)=\delta_{\{1\}}$, the statistic in (\ref{asymp1}) simplifies to
	$$
	T_{x, 1 }=\nu_{x, 1}^{-\frac{1}{2}}\left\{\sum_{j=1}^{p} l_{j}-\sum_{j \in \mathcal{J}_{k}, k=1}^{K} l_{j}-d_{x, 1}-\mu_{x, 1}\right\} \Rightarrow \mathcal{N}(0,1),
	$$
	where
	$$
	\begin{aligned}
		&d_{x, 1}=\frac{p-M+\sum_{k=1}^{K} m_{k} \alpha_{k}}{1-c_2}-\sum_{k=1}^{K}\frac{m_k\alpha_k(1-\alpha_k-c_1)}{1-\alpha_k+c_2 \alpha_k},\\
		&\mu_{x, 1}=\frac{qc_2}{(1-c_2)^2}+\frac{\beta_{y}c_2}{1-c_2},\quad \nu_{x, 1}=\frac{(q+1)h^2}{(1-c_2)^4}+\frac{\beta_{x}c_1+\beta_{y}c_2}{(1-c_2)^2}.
	\end{aligned}
	$$
\end{example}

\begin{example} \label{ex2}
	If $f(x)=\log x$ and $H(t)=\delta_{\{1\}}$, the statistic in (\ref{asymp1}) simplifies to
	$$
	T_{\log, 1}=\nu_{\log, 1}^{-\frac{1}{2}}\left\{\sum_{j=1}^{p} \log \left(l_{j}\right)-\sum_{j \in \mathcal{J}_{k}, k=1}^{K} \log \left(l_{j}\right)-d_{\log, 1}-\mu_{\log, 1}\right\} \Rightarrow \mathcal{N}(0,1),
	$$
	where
	\begin{align}
		d_{\log,1}=&p\left\{\frac{(1-c_2)\log(1-c_2)}{c_2}+\frac{(1-h^2)\left[\log(1-h^2)-\log(1-c_2)\right]}{c_2-h^2}
		\right\} \notag\\
		&-\sum_{k=1}^{K} m_{k} \log \left(\frac{1-\alpha_k-c_1}{1-\alpha_k+c_2 \alpha_k}\right), \notag
	\end{align}
	\begin{align}
		\mu_{\log, 1}=&\frac{q}{2} \log\frac{1-h^2}{(1-c_2)^2}-\frac{\beta_{x}c_1-\beta_{y}c_2}{2}, \notag\\
	 \nu_{\log, 1}=&-(q+1) \log (1-h^2)+(\beta_{x}c_1+\beta_{y}c_2). \notag
	\end{align}

\end{example}

As presented in Corollary \ref{coro1}, the  mean and variance terms in Examples \ref{ex1} and \ref{ex2} can  be computed using the residues. When $f(x) = x$ and $f(x) = \log x$, some  results relevant to computing contour integrals  can be found in the references \cite{Jiang2017} and \cite {zheng2012}, which are utilized in the proofs of the above examples.

\section{Application  to   large-dimensional linear regression models}\label{sec3}
In large-dimensional linear regression models, identifying the number of significant variables is crucial for helping measure the complexity of the model, which assists in further refining selection algorithms, such as LASSO or step-wise regression.  This approach helps in focusing on truly influential predictors, which is useful in modern applications with large datasets.

In testing for significant variables, statistical tools like Wilk's likelihood ratio test (LRT) play an essential role. 
In large-dimensional settings, \cite{Bai2013} developed a modified Wilk’s test for the general linear hypothesis using random matrix theory, with the test statistic expressed as a function of eigenvalues of the standard Fisher matrix. However, their method cannot establish a direct relation with the number of significant variables and is thus unsuitable for tackling our current issue. To address it, we introduce a novel test statistic predicated on the eigenvalues of the spiked Fisher matrix, where the number of spikes in this matrix is equivalent to the number of significant variables. This approach offers a more targeted methodology for variable selection in high-dimensional data analysis.

Let the $p$-dimensional observations $\left\{\mathbf{z}_{i}\right\}_{1\leqslant i \leqslant n}$  being from the following linear regression model: 
\begin{align}\label{linear}
	\mathbf{z}_{i}=\mathbf{B} \mathbf{w}_{i}+\boldsymbol\epsilon_{i}, \quad i=1,2, \ldots, n,
\end{align}
where $\{\mathbf{w}_{i}\}_{1\leqslant i\leqslant n}$ are  known design vectors (or regression variables) of dimension $r$, $\mathbf{B}$ is a $p \times r$ matrix of regression coefficients, and $\{\boldsymbol\epsilon_{i}\}_{1\leqslant i\leqslant n}$ are a sequence of i.i.d. errors drawn from $\mathcal{N}_{p}(\mathbf 0, \mathbf{V})$. We  assume that $n \geqslant p+r$ and  the rank of $\mathbf{W}=\left(\mathbf{w}_{1}, \mathbf{w}_{2},\cdots, \mathbf{w}_{n}\right)$ is $r$.

As a common manipulation, we partition the matrix  $\mathbf{B}$ as $\left(\mathbf{B}_{1}, \mathbf{B}_{2}\right)$, where  $\mathbf{B}_{1}$ has $r_{1}$ columns and $\mathbf{B}_{2}$ has $r_{2}$ columns, satisfying $r_{1}+r_{2}=r$. 
To deal with large-dimensional data, we assume that $p/r_1 \rightarrow c_{1} \in(0,\infty)$, $ p/(n-r) \rightarrow c_{2} \in(0,1)$ as  $\min(p,r_1,n-r) \rightarrow \infty$. Assuming that $M_0$ is the true number of non-zero columns of  $\mathbf{B}_{1}$, which is the true number of   significant variables. Our goal is to test the hypothesis that:
\begin{align} \label{H2}
	\mathcal H_{0}: M=M_0 \quad v.s. \quad \mathcal H_{1}: M \neq M_0.
\end{align}

Divide $\mathbf{w_i}^T$  into $\left(\mathbf{w}_{i, 1}^{T}, \mathbf{w}_{i, 2}^{T}\right)$  in a same manner corresponding to the partition of $\mathbf B=\left(\mathbf{B}_{1}, \mathbf{B}_{2}\right)$.  Then, as elaborated in \cite{Bai2013},  the  well-known  Wilk’s statistic  can be written in terms of Wishart-distributed matrices as
\begin{align} \label{Lambda}
	{\Lambda}_{n}=\left|\mathbf{I}+\frac{r_{1}}{n-r}\mathbf H_0\mathbf G^{-1} \right|^{-1},
\end{align}
where $ \mathbf H_0=(\hat{\mathbf{B}}_{1}-\mathbf B_1) \mathbf{A}_{11: 2}(\hat{\mathbf{B}}_{1}-\mathbf B_1)^{T}/r_1, \mathbf G=n\hat{\mathbf{V}}/(n-r)$. 
Here,  $\mathbf{A}_{11: 2}=\sum_{i=1}^n \mathbf{w}_{i,1} \mathbf{w}_{i,1}^{T}-\sum_{i=1}^n \mathbf{w}_{i,1} \mathbf{w}_{i,2}^{T}\left(\sum_{i=1}^n \mathbf{w}_{i,2} \mathbf{w}_{i,2}^{T}\right)^{-1} \sum_{i=1}^n \mathbf{w}_{i,2} \mathbf{w}_{i,1}^{T}$,
and $\hat{\mathbf{B}}_{1}$ consists of the first $r_{1}$ columns  of $\hat{\mathbf{B}}$ with $\hat{\mathbf{B}}$ being the maximum likelihood estimator of $\mathbf B$ in the full parameter space.  Under the null, by Lemmas 8.4.1 and 8.4.2 in \cite{Anderson2003}, we can obtain that
\begin{align} \label{wishart}
	(n-r)\mathbf G \sim W_{p}(\mathbf{V}, n-r),\quad r_1\mathbf H_0 \sim W_{p}(\mathbf{V}, r_1),
\end{align}
and  $\mathbf H_0$ is independent of $\mathbf{G} $, where $W_{p}(\cdot,\kappa)$ represents the $p$-dimensional Wishart distribution with $\kappa$ degrees of freedom. Therefore, $\mathbf H_0\mathbf G^{-1}$ is  distributed as the Fisher matrix.
According to \cite{Anderson2003}, the logarithm-transformed LRT, $-\log {\Lambda}_{n}$,   is traditionally  employed as a test statistic for the general linear hypothesis.

To apply Theorem \ref{Th1} to test the hypothesis \eqref{H2}, we first construct a spiked Fisher matrix with $M_0$ spikes. We start by decomposing $\mathbf{H}_0$ as $\mathbf{H}_0=(\hat{\mathbf{B}}_{1} \mathbf{A}_{11: 2}\hat{\mathbf{B}}_{1}^{T}-\hat{\mathbf{B}}_{1} \mathbf{A}_{11: 2}\mathbf{B}_{1}^{T}-\mathbf{B}_{1}\mathbf{A}_{11: 2}\hat{\mathbf{B}}_{1}^T+\mathbf{B}_{1}\mathbf{A}_{11: 2}\mathbf{B}_{1}^T)/r_1$. Then define  $\mathbf H=\hat{\mathbf{B}}_{1} \mathbf{A}_{11: 2}\hat{\mathbf{B}}_{1}^{T}/r_1$, so that
$$
\mathbf H=\mathbf{H}_0+\mathbf \Delta_0,
$$
where $\mathbf \Delta_0=(\hat{\mathbf{B}}_{1} \mathbf{A}_{11: 2}\mathbf{B}_{1}^{T}+\mathbf{B}_{1}\mathbf{A}_{11: 2}\hat{\mathbf{B}}_{1}^T-\mathbf{B}_{1}\mathbf{A}_{11: 2}\mathbf{B}_{1}^T)/r_1$. Here, $\mathbf \Delta_0$  has $M_0$ dominated  eigenvalues, indicating the spikes, and thus the matrix $\mathbf H$  represents  a   perturbation of rank $M_0$ on the matrix $\mathbf H_0$. 
We demonstrate the spiked structure and  the eigenvalue  properties  of $\mathbf H$  in the supplementary material.
Combining the formula \eqref{wishart}, the matrix $\mathbf{H}\mathbf{G}^{-1}$ corresponds exactly to the spiked Fisher matrix  with $M_0$ spikes defined in \eqref{F_def}   under the null. 

To test the number of significant regression variables, equivalent to testing the number of  spikes, we use the perturbed matrix $\mathbf H$ 
to design the modified LRT statistic, given by
\begin{align} \label{Lambda1}
	\Lambda_{n}^{*}=\left|\mathbf{I}+\frac{r_{1}}{n-r}\mathbf H\mathbf G^{-1} \right|^{-1}.
\end{align}

Denote the eigenvalues of $\mathbf{H}\mathbf{G}^{-1}$ in \eqref{Lambda1} in  descending order as below:
$$l_1 \ \geqslant \ l_2 \ \geqslant \ \cdots  \geqslant \ l_p, $$
where the largest $M_0$ eigenvalues are  sample spiked eigenvalues.
Then, the test statistic for the hypothesis $\eqref{H2}$ can be constructed as a   partial linear spectral statistic defined in \eqref{newtest2}, with the function $f$ using the traditional log transformation like \cite{Bai2013}, i.e.
$$-\log {\Lambda}_{n}^*-\sum_{i=1}^{M}\log(1+\frac{c_2}{c_1}l_i).$$
Its CLT is established as follows, and the computation of mean and variance can be found in \cite{Bai2013}.

\begin{theorem}\label{Th2}
	For the testing problem \eqref{H2} in the large-dimensional linear regression model in \eqref{linear},  $\Lambda_n^*$ is defined as in the formula \eqref{Lambda1}, and it is assumed that $p/r_1 \rightarrow c_{1} \in(0,\infty)$, $ p/(n-r) \rightarrow c_{2} \in(0,1)$ as $\min(p,r_1,n-r)  \rightarrow \infty$. Then, under the null, we have
	\begin{align}
		T_{l}=\nu_{l}^{-\frac{1}{2}}\left[-\log {\Lambda}_{n}^*-\sum_{i=1}^{M}\log(1+\frac{c_2}{c_1}l_i) -d_{l1} +d_{l2}-\mu_{l}\right] \Rightarrow \mathcal{N}(0, 1),
	\end{align} \label{test_linear}

	where 
	$$d_{l1}=p \int \log(1+\frac{c_2}{c_1}x) dF^{(c_{n1},c_{n2},H_n)}(x) $$ can be obtained by Algorithm 1 in the supplementary material, 
	$$d_{l2}=\sum\limits_{i=1}^{M} \log(1+\frac{c_2}{c_1}\psi_i), 
	\mu_{l}=\frac{1}{2} \log \frac{\left(c^{2}-d^{2}\right) h^{2}}{\left(c h-c_{2} d\right)^{2}},
	\text{and} \ \nu_{l}=2 \log \left(\frac{c^{2}}{c^{2}-d^{2}}\right).
	$$
	Here,
	$$
	h=\sqrt{c_1+c_2-c_1c_2}, \quad
	a, b =\frac{\left(1 \mp h\right)^{2}}{\left(1-c_2\right)^{2}},\quad a< b,
	$$
	and
	$$
	c,d=\frac{1}{2}\left(\sqrt{1+\frac{c_2}{c_1} b} \pm \sqrt{1+\frac{c_2}{c_1} a}\right), \quad c> d. 
	$$
	\end{theorem}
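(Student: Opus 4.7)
The plan is to reduce Theorem~\ref{Th2} to a direct application of Theorem~\ref{Th1} with a carefully chosen test function and the simplest possible limiting spectral distribution $H=\delta_{\{1\}}$.

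First I would rewrite the corrected LRT as a partial linear spectral statistic of $\mathbf{H}\mathbf{G}^{-1}$. Using $|\mathbf{I}+\tfrac{r_1}{n-r}\mathbf{H}\mathbf{G}^{-1}|=\prod_{j=1}^p(1+\tfrac{r_1}{n-r}\ell_j)$ together with $p/r_1\to y_1$ and $p/(n-r)\to y_2$, so that $r_1/(n-r)\to y_2/y_1$, we obtain
$$-\log\Lambda_n^{*}=\sum_{j=1}^p \log\!\Big(1+\tfrac{r_1}{n-r}\ell_j\Big),$$
which is exactly the form \eqref{newtest2} with test function $f(x)=\log(1+\tfrac{y_2}{y_1}x)$, up to a vanishing difference between the finite-sample ratio and its limit. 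Next I would verify that under $\mathcal{H}_0$ the matrix $\mathbf{H}\mathbf{G}^{-1}$ is distributed as a generalized spiked Fisher matrix. By \eqref{wishart} and the independence of $\mathbf{H}$ and $\mathbf{G}$, a simultaneous whitening by $\mathbf{V}^{1/2}$ preserves eigenvalues and reduces the problem to the ratio of two independent standardized Wisharts; this is precisely the matrix \eqref{F} with $\mathbf{R}_p^{*}\mathbf{R}_p$ a rank-$M_0$ perturbation of the identity. Hence $H=\delta_{\{1\}}$, $(c_1,c_2)=(y_1,y_2)$, there are $M_0$ spikes, and Assumptions~\ref{assum1}--\ref{assum3} are satisfied since the errors are real Gaussian.

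I would then invoke Theorem~\ref{Th1} with $f(x)=\log(1+\tfrac{y_2}{y_1}x)$ and $H=\delta_{\{1\}}$. The identification $d_1(f,H_n)=d_l$ is immediate from \eqref{d1}, with the numerical evaluation provided by Algorithm~\ref{alg1}; the spike correction $d_2(f,\alpha,H)=\sum_{i=1}^M\log(1+\tfrac{y_2}{y_1}\psi_i)$ matches the corresponding sum in the statement; and the random term $\sum_{j=1}^p f(\ell_j)-\sum_{i=1}^M f(\ell_i)$ corresponds to $-\log\Lambda_n^{*}-\sum_{i=1}^M\log(1+\tfrac{y_2}{y_1}\ell_i)$. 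Because the errors are real Gaussian, $\beta_x=\beta_y=0$ and $q=1$, so the last two summands in both $\mu_{f,H}$ and $\nu_{f,H}$ vanish, leaving only the $q$-weighted contour integrals to evaluate.

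The main obstacle, as is typical for CLTs of this flavour, is computing those surviving contour integrals in closed form. Under $H=\delta_{\{1\}}$ the companion Stieltjes transform $\um(z)$ and $m_0(z)$ satisfy quadratic equations and the Fisher LSD is supported on $[a,b]=[(1-h)^2/(1-y_2)^2,(1+h)^2/(1-y_2)^2]$ with $h=\sqrt{y_1+y_2-y_1y_2}$. I would deform both contours in \eqref{mean} and \eqref{var} onto the support and apply the standard change of variables $z=z(\xi)$ that parametrizes $[a,b]$ by the unit circle (so that $\xi=\pm 1$ correspond to $z=a,b$). The factor $f(z)=\log(1+\tfrac{y_2}{y_1}z)$ becomes a sum of two logarithms that are linear in $\xi$, and $\sqrt{1+(y_2/y_1)a}$, $\sqrt{1+(y_2/y_1)b}$ evaluate at $\xi=\pm 1$, producing the constants $c\mp d$. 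Collecting residues of the resulting rational integrands at the poles inside the unit disk and simplifying yields
$$\nu_l=2\log\!\Big(\frac{c^2}{c^2-d^2}\Big)\quad\text{and}\quad \mu_l=\tfrac{1}{2}\log\frac{(c^2-d^2)h^2}{(ch-c_2 d)^2},$$
which completes the identification with the statement. The bookkeeping in these residue computations is the only technical step; everything else is a direct specialization of Theorem~\ref{Th1}.
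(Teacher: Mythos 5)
Your proposal is correct and follows the route the paper clearly has in mind (the paper gives no explicit proof of Theorem~\ref{Th2}, but the ingredients are all laid out): express $-\log\Lambda_n^{*}$ as a linear spectral statistic of $\mathbf{H}\mathbf{G}^{-1}$ with $f(x)=\log(1+\tfrac{y_2}{y_1}x)$, identify $\mathbf{H}\mathbf{G}^{-1}$ as a spiked Fisher matrix with $(c_1,c_2,H)=(y_1,y_2,\delta_{\{1\}})$, specialize Theorem~\ref{Th1}, and use $q=1$, $\beta_x=\beta_y=0$ (Gaussian errors) so that only the $q$-weighted contour integrals survive. You could shorten the final step considerably: the residue computation you describe is exactly Lemma~\ref{lem1} (already in the paper, quoted from \cite{Zheng2012}) with $a=a'=1$ and $b=b'=y_2/y_1$; plugging in and setting $\beta_x=\beta_y=0$ gives $\mu_l$ and $\nu_l$ directly, so there is no need to redo the change of variables and residue bookkeeping. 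Two small caveats, both of which the paper shares rather than resolves: (i) your phrase ``this is precisely the matrix \eqref{F} with $\mathbf{R}_p^{*}\mathbf{R}_p$ a rank-$M_0$ perturbation of the identity'' is an overstatement, since $r_1\mathbf{H}$ is a \emph{noncentral} Wishart (the perturbation $\mathbf{\Delta}_0$ is additive and random, and is only asymptotically of rank $M_0$), so $\mathbf{H}\mathbf{G}^{-1}$ is approximated by, not identical to, the spiked Fisher model; the paper makes the same leap and explicitly defers the noncentral case to future work. (ii) The test statistic uses the finite ratio $r_1/(n-r)$ inside $\Lambda_n^{*}$ but the limiting ratio $y_2/y_1$ in the spike correction and in $d_l$; you correctly note this mismatch is asymptotically negligible, but strictly one should argue the $o(1)$ replacement does not affect the CLT.
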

	
	Based on Theorem~\ref{Th2}, we can identify the number of significant variables through a sequential testing procedure. We initiate with $M_0=1$ and proceed with the sequential test until the statistic first falls within the acceptance domain. The number of significant variables is then determined by the position at which this occurs.

\section{Application to  change point detection}\label{sec4}
Change point detection can identify mutations of some statistical characteristics in sequence data. As a processing tool, it has proven useful in many applications such as  DNA segmentation, econometrics, and disease demography.  In this section,  we will employ a hypothesis testing method to detect the change points.   The test of the existence of change points will be transformed into testing whether there are spikes in the Fisher matrices constructed in the following way.

Given a $p$-dimensional sequence data $\mathbf X=\left(\mathbf{x}_{1},\mathbf{x}_{2} \cdots, \mathbf{x}_T\right)$ of length $T$ with both  change points and additive outliers, and the locations of which are unknown in practice.
Additive outliers affect only a single or few specific observations, whereas change points occur over a relatively long period.  Assuming that the change points occur at the time $t_c$, which  needs to be correctly detected in practice  while avoiding the influence of additive outliers.

As one of the fundamental tools, we use the point-by-point sliding time window  for change point detection, which can be described below.
At each step of sliding, the first sample in the window is removed, and a new sample is added at the end. Similar to  \cite{Chen2022}, we partition the samples within each time window $W_j, j=1,2,\cdots$ into two groups, with their respective population covariance matrices denoted as $\mathbf \Sigma_{j}^{(1)}$ and $\mathbf \Sigma_j^{(2)}$. 
Let  the true number of spikes in the matrix $(\boldsymbol{\Sigma}_{j}^{(1)})^{-1} \boldsymbol{\Sigma}_{j}^{(2)}$ in the  window $W_j$ be denoted by $M_j$, $j=1,2,\cdots$.  
When there are no anomalies in $W_j$, we have $(\boldsymbol{\Sigma}_{j}^{(1)})^{-1}\boldsymbol{\Sigma}_{j}^{(2)}=\mathbf{I} $, thus $M_j=0$. Conversely,  when anomalies are introduced in $W_j$,   $M_j~\neq0$.
To detect the change point in the time window $W_j$, we  consider the following hypothesis
\begin{align} \label{H5}
	\mathcal{H}_{0}: M_j=0 \quad  v.s.  \quad \mathcal{H}_{1}: M_j\neq 0, j=1,2,\cdots.
\end{align}
This is a degenerate hypothesis for the problem \eqref{H1}.

 Denote $\mathbf X_{j}^{(1)}$ and  $\mathbf X_{j}^{(2)}$ as the two distinct  group of samples in the window $W_j$, with the sample size $q_{j1}$ and $q_{j2}$, respectively. Then, the  corresponding unbiased sample covariance matrices can be denoted by $\mathbf S_{j}^{(1)}$ and  $\mathbf S_{j}^{(2)}$. Using the point-by-point sliding time window method, multiple tests are required, making the method inherently complex. To manage this complexity, a simple form of the test statistic, $\mathrm{tr}[(\mathbf{S}_{j}^{(1)})^{-1}\mathbf{S}_{j}^{(2)}]$, is employed. The key observation is that the Fisher matrix $(\mathbf{S}_{j}^{(1)})^{-1} \mathbf{S}_{j}^{(2)}$ has no sample spikes  under the null hypothesis, while  it has spikes under the alternative hypothesis.

To develop the CLT of the test statistic, we introduce some notations. 
Normalize the first group of data  $\mathbf X_{j}^{(1)}$ as $\boldsymbol \xi_{j}^{(1)}$, and  the second  group of data $\mathbf X_{j}^{(2)}$ as $ \boldsymbol \xi_{j}^{(2)}$. 
Further, define  $\phi_{j}^{(1)}=E\left|\xi_{j,11}^{(1)}\right|^{4}$, $\phi_{j}^{(2)}=E\left|\xi_{j,11}^{(2)}\right|^{4}$, where $\xi_{j,11}^{(1)}$ and $\xi_{j,11}^{(2)}$ are  the $(1,1)$-th entries  of the matrices $\boldsymbol  \xi_{j}^{(1)}$ and $\boldsymbol \xi_{j}^{(2)}$, respectively. 
Then, the asymptotic distribution of the test statistic $\mathrm{tr}[(\mathbf{S}_{j}^{(1)})^{-1}\mathbf{S}_{j}^{(2)}]$ is established as follows, which is a natural consequence of Theorem \ref{Th1}, and the calculation can refer to Example \ref{ex1}.
\begin{corollary}\label{Th3}
	For the testing problem \eqref{H5} in each  window $W_j,j=1,2,\cdots$,  assuming that $p/q_{j1} \rightarrow c_{j}^{(1)} \in(0,1)$, $ p/q_{j2} \rightarrow c_{j}^{(2)} \in (0,+\infty)$ as  $\min(p, q_{j1}, q_{j2}) \rightarrow \infty$. Then, under the null, we have
	\begin{align}\label{Tj}
		T_{j}=\nu_{x}^{-\frac{1}{2}}\left\{\mathrm{tr}[(\mathbf{S}_{j}^{(1)})^{-1} \mathbf{S}_{j}^{(2)}]-d_{x}-\mu_{x}\right\} \Rightarrow \mathcal{N}(0,1), 
	\end{align}
	where
	$$
	\begin{aligned}
		d_{x}&=\frac{p}{1-c_{j}^{(1)}}, \\
		\mu_{x}&=\frac{qc_{j}^{(1)}}{(1-c_{j}^{(1)})^2}+\frac{\beta_{j}^{(1)}c_{j}^{(1)}}{1-c_{j}^{(1)}},\\ 
		\nu_{x}&=\frac{(q+1)h_j^2}{(1-c_{j}^{(1)})^4}+\frac{\beta_{j}^{(1)}c_{j}^{(1)}+\beta_{j}^{(2)}c_{j}^{(2)}}{(1-c_{j}^{(1)})^2}.
	\end{aligned}
	$$
	Here,  $h_j^2=(c_j^{(1)})^2+(c_j^{(2)})^2-c_j^{(1)}c_j^{(2)}$,  $\beta_{j}^{(1)}={\phi_{j}^{(1)}-q-2}$ and $\beta_{j}^{(2)}={\phi_{j}^{(2)}-q-2}$ with $q=1$ for real case and $q=0$ for complex. 	
\end{corollary}

Based on Corollary~\ref{Th3}, we design  an algorithm to detect the location  $t_c$ of the change point, as elaborated in Algorithm \ref{alg2}. Note that the criterion for determining whether a change point is detected is that abnormal points are found in $s$ consecutive windows.

\begin{algorithm} 	
	\caption{Change Point Detection Based on Hypothesis Testing.}
	\renewcommand{\algorithmicrequire}{\textbf{Input:}}
	\renewcommand{\algorithmicensure}{\textbf{Output:}}
	\begin{algorithmic}[1]
		\REQUIRE $\mathbf     X=\left(\mathbf{x}_{1}, \mathbf{x}_2,\cdots, \mathbf{x}_T\right)$; $q_{11}, q_{12}$; $s$; an  empty set $\mathcal{T}$. 
		\ENSURE The location  $t_c$ of change point.
		\STATE $j=0$; 
		\WHILE {There are no $s$ consecutive numbers in the set $\mathcal{T}$}
		\STATE  $j=j+1$;
		\STATE  Partition the samples in the  window $W_j$ into two groups, and compute the respective sample covariance matrices as $\mathbf{S}_{j}^{(1)}$ and  $\mathbf{S}_{j}^{(2)}$;
		\STATE  Calculate $T_j$ in the window $W_j$ according to \eqref{Tj};
		\IF{$T_j$ falls outside the rejection region} 
		\STATE   Slide the Window to $ W_{j+1}$;
		\ELSE
		\STATE    Delete the sample $x_{t_0}$ at the end of $W_j$, where $t_0$  is the location of this sample in the  data $\mathbf X$ ;
		\STATE    Add $t_0$ to the set $\mathcal{T}$;
		\STATE    Slide the Window to $ W_{j+1}$;
		\ENDIF
		\ENDWHILE
		\STATE  Obtain the first value of the $s$ consecutive numbers in $\mathcal{T}$ as the location $t_c$ where the change point occurs.
	\end{algorithmic}
	\label{alg2}
\end{algorithm}

\begin{remark}
 	A critical step in Algorithm \ref{alg2} is the removal of abnormal samples in step 9, which is essential for avoiding the influence of additive outliers on change point detection.	
 	Without this removal, the newly introduced additive outliers will exist in multiple consecutive time windows until they slip out of the window,  which can also produce consecutive $s$ anomalous results  and thus lead to  false detection of change points.
 	Furthermore, by the removal of abnormal points, for the lengths of two groups of data in all  windows,  we have $q_{11}=q_{21}=\cdots=q_{j1}=\cdots$ and $q_{12} \geq q_{22}\geq \cdots \geq q_{j2} \geq\cdots$.
 \end{remark}

\section{Numerical and empirical studies}\label{sec5}
\subsection{Evaluation for the CLT proposed in Theorem 2.1}\label{sec5.1}
To verify the effectiveness of our established CLT in Theorem \ref{Th1},  the following two models are considered.\\
\textbf{Model 1:} Assuming that the matrix $ \mathbf{R}_{p}^{*} \mathbf{R}_{p}$ is  non-diagonal, with $\mathbf{\Sigma_{2}}=\mathbf I_p$ and $\mathbf{\Sigma_{1}}=\mathbf U \text{diag}(10,8,8,6,1,\cdots,1)\mathbf U^*$, where $\mathbf U$ is an orthogonal matrix. \\
\textbf{Model 2:} Assuming that the matrix $ \mathbf{R}_{p}^{*} \mathbf{R}_{p}$ is  diagonal, with $\mathbf{\Sigma_{2}}=\mathbf I_p$ and  $\mathbf \Sigma_1=\text{diag}(36,25,25,16,2,\cdots,2,1,\cdots,1)$. Here, ``2'' has multiplicity $p/2-4$, and ``1'' has multiplicity $p/2$. 

In both models, the true number of spikes  is $M_0=4$. Moreover, in Model 1, the LSD of $\mathbf{\Sigma_{1}}\mathbf{\Sigma_{2}}^{-1}$  is $H(t)=\delta_{\{1\}}$, while in Model 2, the LSD is $H(t)=0.5\delta_{\{1\}}+0.5\delta_{\{2\}}$. In particular,  the mean term $\mu_{f,H}$ and variance term $\nu_{f,H}$ in Model 2 can be obtained numerically by the procedure proposed in \cite{zheng2017}.
 As a robustness check of our method, the samples $\{x_{ij}\}$ and $\{y_{il}\}$ in each model are generated from the following two populations.\\
\textbf{Gaussian Assumption:} Both $\{x_{ij}\}$ and $\{y_{il}\}$ are  i.i.d. samples from $\mathcal N(0, 1)$.\\
\textbf{Gamma Assumption:} Both $\{x_{ij}\}$ and $\left\{y_{il}\right\}$ are  i.i.d.  samples from \\$ \left\{ Gamma(2,1)-2\right\} / \sqrt2$.\\

In such settings, four scenarios are considered.  We select $f(x)= x$ and $f(x)=\log x$ as  examples to conduct simulations.   For each scenario, the value of dimensionality  is set to $p=100, 200, 400$. The ratio pairs $(c_1, c_2)$ are set to $(0.8, 0.2), (2, 0.2), (0.2, 0.5) $ for $f(x)= x$, and $(0.5, 0.2), (0.4, 0.5), (0.2, 0.5) $ for $f(x)=\log x$, respectively. 

Tables \ref{M1_log} and \ref{M2_log} report the empirical sizes and powers  of rejecting the null hypothesis \eqref{H1}  over 1,000 replications in Models 1 and 2 at a significance level $\alpha = 0.05$ when $f(x)=\log(x)$. 
For space considerations, similar results for the function $f(x)=x$ are presented in the supplementary material. Under different models and  population assumptions, the empirical sizes of our proposed test statistic are approximately equal to the significance level when the null hypothesis is true. 
The further the alternative hypothesis is from the null hypothesis, the higher the power is.  From these results, it can be inferred that the true value $M_0$ aligns with the location of the first local minimum of  empirical sizes.
Furthermore, Figures \ref{fig:M1_log} and \ref{fig:M2_log} show the empirical density histograms of our proposed test statistic under the null for  cases where $p=200$. These figures suggest that the proposed statistic is asymptotically normal when the null hypothesis is true, further validating the effectiveness of our method.

\begin{table}[htbp]
	\centering
	\caption{Empirical size of rejecting the null hypothesis \eqref{H1} in Model 1 when $f(x)=\log x$.}
	\small
	\renewcommand\arraystretch{1.2}
	\setlength{\tabcolsep}{1.3mm}
	\begin{tabular}{lllllll}
		\hline
		Values of $M_0$& \multicolumn{1}{l}{$M_0=1$} & \multicolumn{1}{l}{$M_0=2$} & \multicolumn{1}{l}{$M_0=3$} & \multicolumn{1}{l}{$M_0=4$} & \multicolumn{1}{l}{$M_0=5$} & \multicolumn{1}{l}{$M_0=6$}  \\
		\hline
		\multicolumn{7}{c}{ Gaussian population } \\
		\hline
		$p=100,c_1=0.5,c_2=0.2$ &0.993  & 0.785  & 0.251  & \textbf{0.050 }  & 0.083  & 0.154  \\
		$p=200,c_1=0.5,c_2=0.2$ & 0.988  & 0.790  & 0.242  & \textbf{0.051}  & 0.090  &0.183  \\
		$p=400,c_1=0.5,c_2=0.2$ &0.992  & 0.803  & 0.265  & \textbf{0.054} & 0.079  &0.188 \\
		\hline
		$p=100,c_1=0.4,c_2=0.5$ & 0.964  & 0.682  & 0.209  & \textbf{0.043}  & 0.074  & 0.152 \\
		$p=200,c_1=0.4,c_2=0.5$ & 0.97  & 0.658 & 0.198  & \textbf{0.047}  &0.090  &0.253   \\
		$p=400,c_1=0.4,c_2=0.5$ & 0.964  & 0.694  & 0.211  & \textbf{0.050}  & 0.122  &0.272  \\
		\hline
		$p=100,c_1=0.2,c_2=0.5$ & 0.990  & 0.774  & 0.263  & \textbf{0.053}  & 0.106  &0.204  \\
		$p=200,c_1=0.2,c_2=0.5$ & 0.990  & 0.769  &0.246 & \textbf{0.051 } & 0.113  &0.328 \\
		$p=400,c_1=0.2,c_2=0.5$ & 0.989  & 0.800  & 0.260  & \textbf{0.052}  & 0.134 & 0.352 \\
		\hline
		\multicolumn{7}{c}{Gamma population } \\
		\hline
		$p=100,c_1=0.5,c_2=0.2$ & 0.863  &0.482 &0.132 &  \textbf{0.045}&0.060& 0.084 \\
		$p=200,c_1=0.5,c_2=0.2$ & 0.826  &0.480 &0.140   &\textbf{0.044} & 0.069 &0.118 \\
		$p=400,c_1=0.5,c_2=0.2$ & 0.845  &0.471 &0.139 &\textbf{0.045}& 0.070& 0.133 \\
		\hline
		$p=100,c_1=0.4,c_2=0.5$& 0.720  &0.363 &0.116&\textbf{0.043} &0.064& 0.122 \\
		$p=200,c_1=0.4,c_2=0.5$ & 0.755 &0.397&0.137 &\textbf{0.051} &0.075 & 0.149 \\
		$p=400,c_1=0.4,c_2=0.5$ & 0.763  &0.404& 0.117 &\textbf{0.052} &0.090  & 0.159\\
		\hline
		$p=100,c_1=0.2,c_2=0.5$& 0.836 &0.456 &0.146 &\textbf{0.042} &0.068 & 0.121 \\
		$p=200,c_1=0.2,c_2=0.5$ & 0.834  &0.434 & 0.128  &\textbf{0.047}& 0.084 &0.187 \\
		$p=400,c_1=0.2,c_2=0.5$ & 0.854 &0.464 &0.159   &\textbf{0.052} &0.096 &0.190 \\
		\hline	
		
	\end{tabular}%
	\label{M1_log}%
\end{table}%

\begin{figure}[htbp]
	\centering
	\subfigure{\includegraphics[width=1.55in]{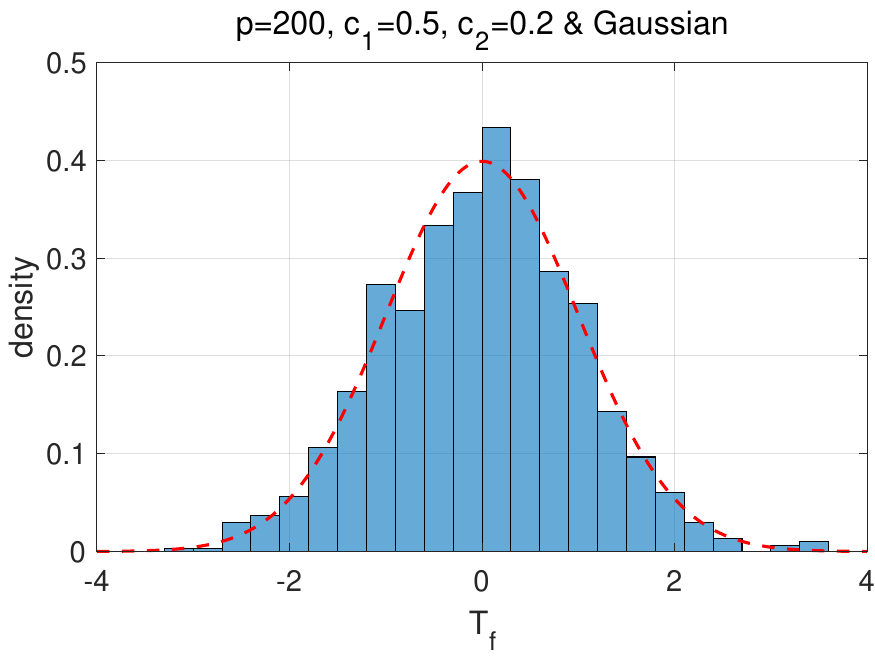}}
	\subfigure {\includegraphics[width=1.55in]{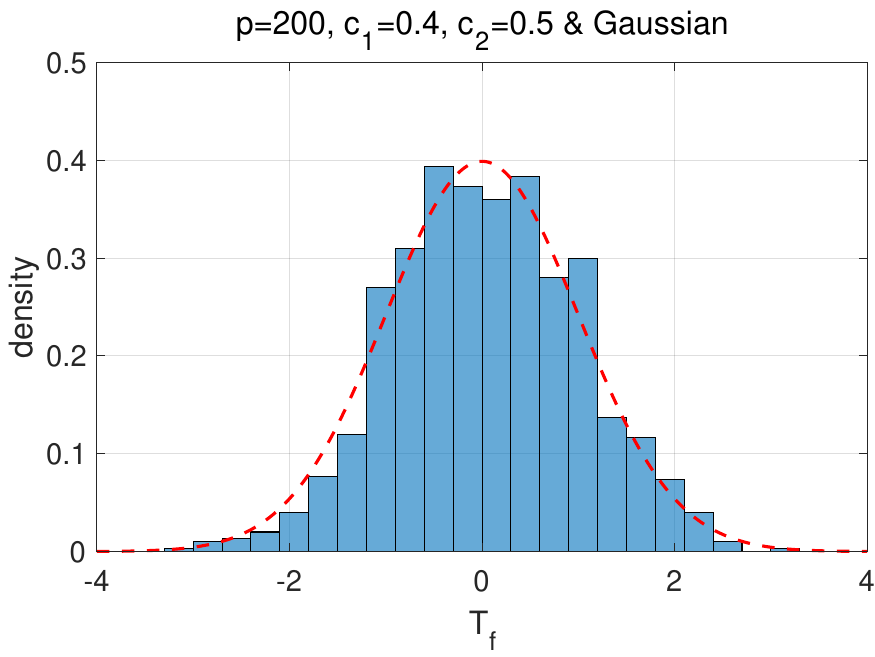}}
	\subfigure {\includegraphics[width=1.55in]{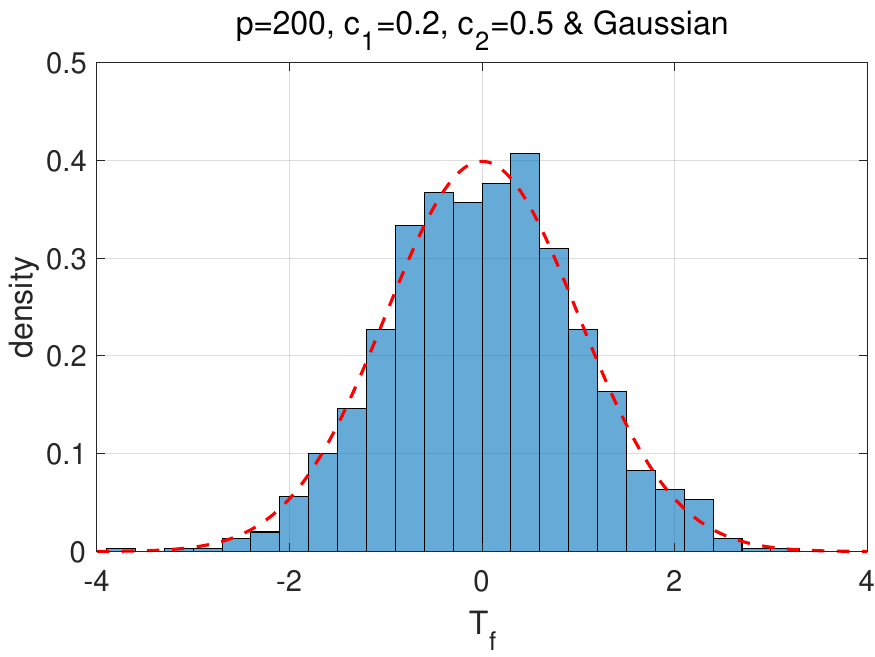}}
	
	\subfigure {\includegraphics[width=1.55in]{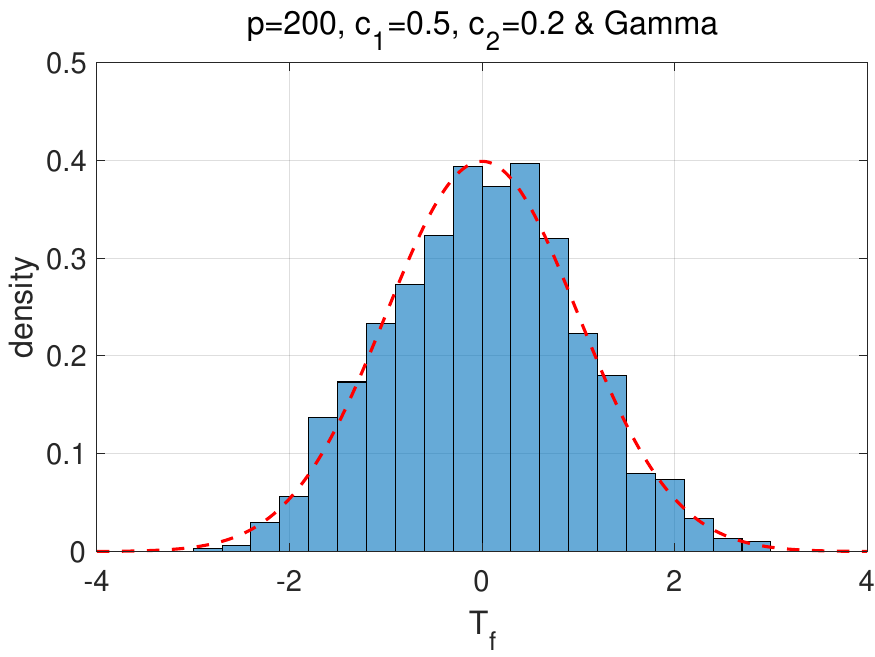}}
	\subfigure {\includegraphics[width=1.55in]{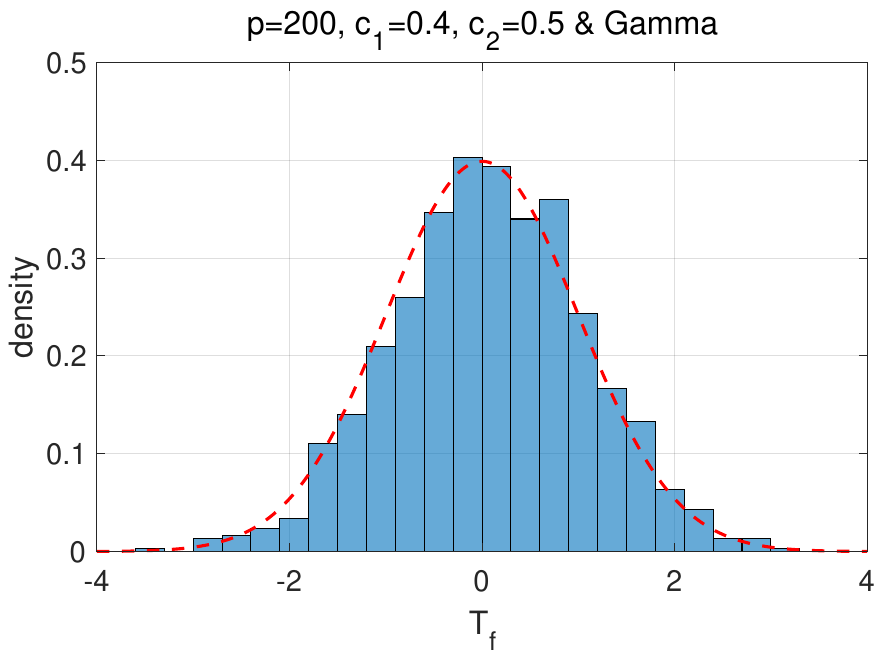}}
	\subfigure {\includegraphics[width=1.55in]{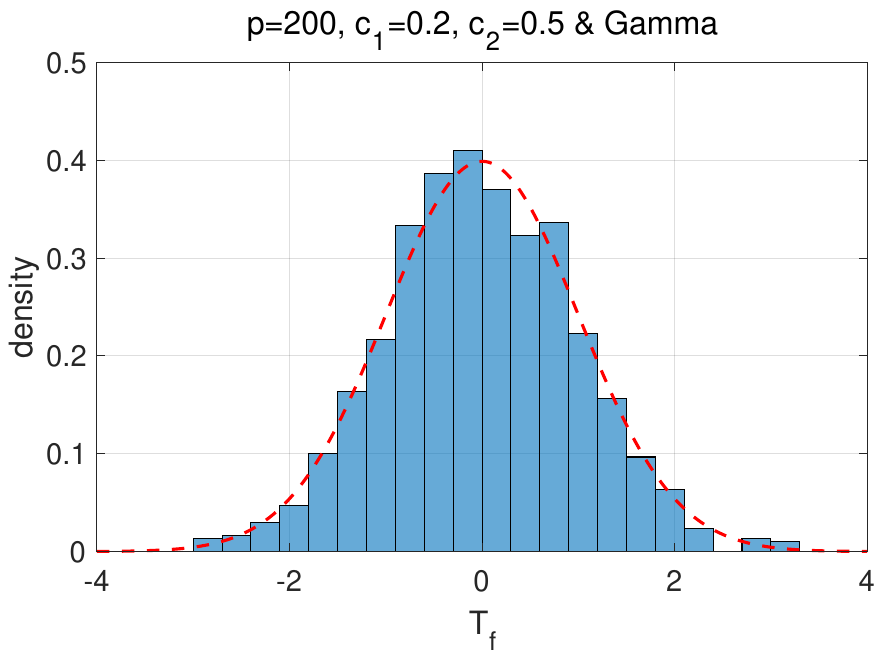}}
	\vspace{-1em}
	\caption{Empirical distribution of $T_{f}$ under the null in Model 1 when  $f(x)=\log x$. }
	\label{fig:M1_log}
\end{figure}

\begin{table}[htbp]
	\centering
	\caption{Empirical size of rejecting the null hypothesis \eqref{H1} in Model 2 when $f(x)=\log x$.}
	\small
	\renewcommand\arraystretch{1.2}
	\setlength{\tabcolsep}{1.2mm}
	\begin{tabular}{lllllll}
		\hline
		Values of $M_0$& \multicolumn{1}{l}{$M_0=1$} & \multicolumn{1}{l}{$M_0=2$} & \multicolumn{1}{l}{$M_0=3$} & \multicolumn{1}{l}{$M_0=4$} & \multicolumn{1}{l}{$M_0=5$} & \multicolumn{1}{l}{$M_0=6$}  \\
		\hline
		\multicolumn{7}{c}{Gaussian population} \\
		\hline
		$p=100,c_1=0.5,c_2=0.2$ & 0.999  & 0.900  & 0.290  & \textbf{0.045 }  & 0.374  & 0.915  \\
		$p=200,c_1=0.5,c_2=0.2$ & 1  &0.913  & 0.311  & \textbf{0.044}  & 0.328  &0.870  \\
		$p=400,c_1=0.5,c_2=0.2$ &1  & 0.938  & 0.359 & \textbf{0.045} & 0.282  &0.872 \\
		\hline
		$p=100,c_1=0.4,c_2=0.5$ & 0.995 & 0.808 & 0.250 & \textbf{0.041}  & 0.690  & 0.996\\
		$p=200,c_1=0.4,c_2=0.5$ & 0.999  &0.858 & 0.308  & \textbf{0.051}  &0.617  &0.999   \\
		$p=400,c_1=0.4,c_2=0.5$ & 0.997  & 0.856  & 0.289  & \textbf{0.050}  & 0.678  &0.995  \\
		\hline
		$p=100,c_1=0.2,c_2=0.5$ &0.999  & 0.888  & 0.294  & \textbf{0.052}  & 0.693  &0.997  \\
		$p=200,c_1=0.2,c_2=0.5$ &0.999  &0.918  & 0.334& \textbf{0.052 } & 0.627  &0.996 \\
		$p=400,c_1=0.2,c_2=0.5$ & 1  & 0.917  & 0.324  & \textbf{0.056}  & 0.825 & 1\\
		\hline
		\multicolumn{7}{c}{Gamma population } \\
		\hline
		$p=100,c_1=0.5,c_2=0.2$ & 0.929  &0.577 &0.157 &  \textbf{0.040} &0.213 & 0.616 \\
		$p=200,c_1=0.5,c_2=0.2$ & 0.955  &0.649 &0.200   &\textbf{0.048} & 0.223 &0.654 \\
		$p=400,c_1=0.5,c_2=0.2$ & 0.949  &0.629 &0.191 &\textbf{0.053}& 0.220& 0.661 \\
		\hline
		$p=100,c_1=0.4,c_2=0.5$& 0.892  &0.507 &0.147&\textbf{0.045} &0.352& 0.897 \\
		$p=200,c_1=0.4,c_2=0.5$ & 0.880  &0.516 &0.153 &\textbf{0.047} &0.257 & 0.810\\
		$p=400,c_1=0.4,c_2=0.5$ & 0.878  &0.491 &0.146  &\textbf{0.054} &0.244  & 0.824\\
		\hline
		$p=100,c_1=0.2,c_2=0.5$ & 0.938  &0.615  &0.182   &\textbf{0.043}  &0.466 & 0.954 \\
		$p=200,c_1=0.2,c_2=0.5$ & 0.958  &0.606 & 0.183  &\textbf{0.046}& 0.492 &0.955 \\
		$p=400,c_1=0.2,c_2=0.5$ & 0.961  &0.662 &0.213 &\textbf{0.049} & 0.376 &0.944 \\
		\hline	
		
	\end{tabular}%
	\label{M2_log}%
\end{table}%

\begin{figure}[htbp]
	\centering
	\subfigure{\includegraphics[width=1.55in]{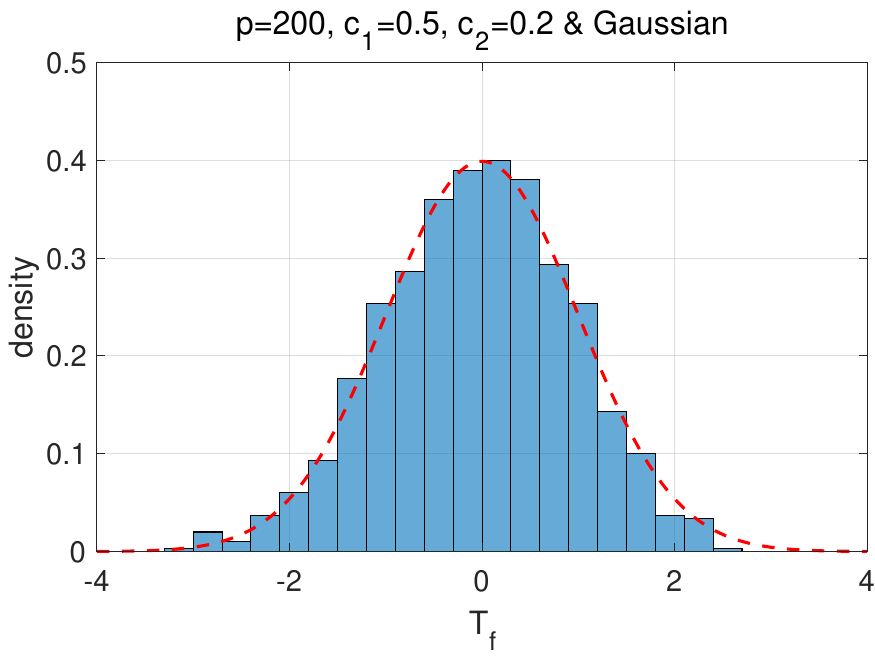}}
	\subfigure {\includegraphics[width=1.55in]{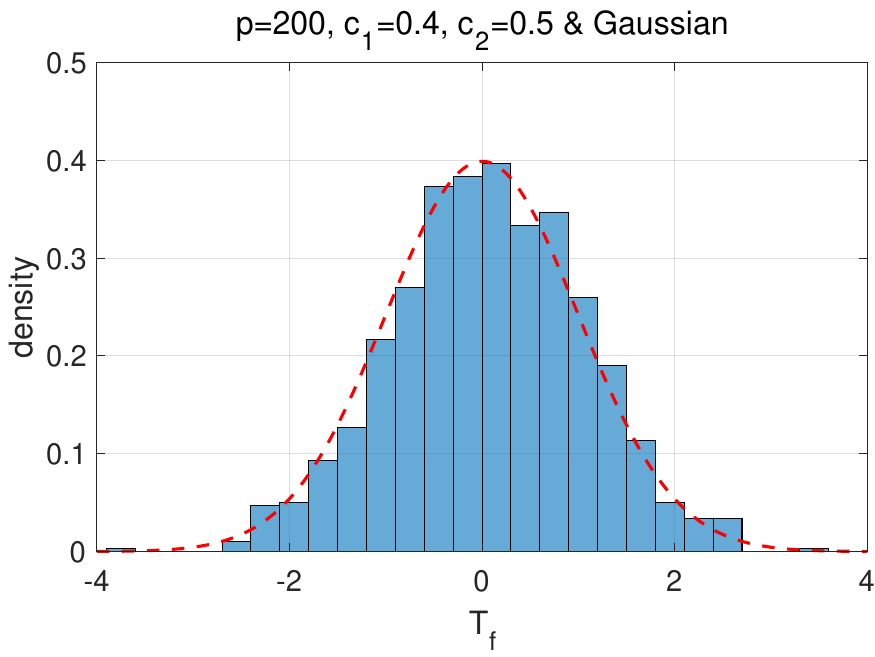}}
	\subfigure {\includegraphics[width=1.55in]{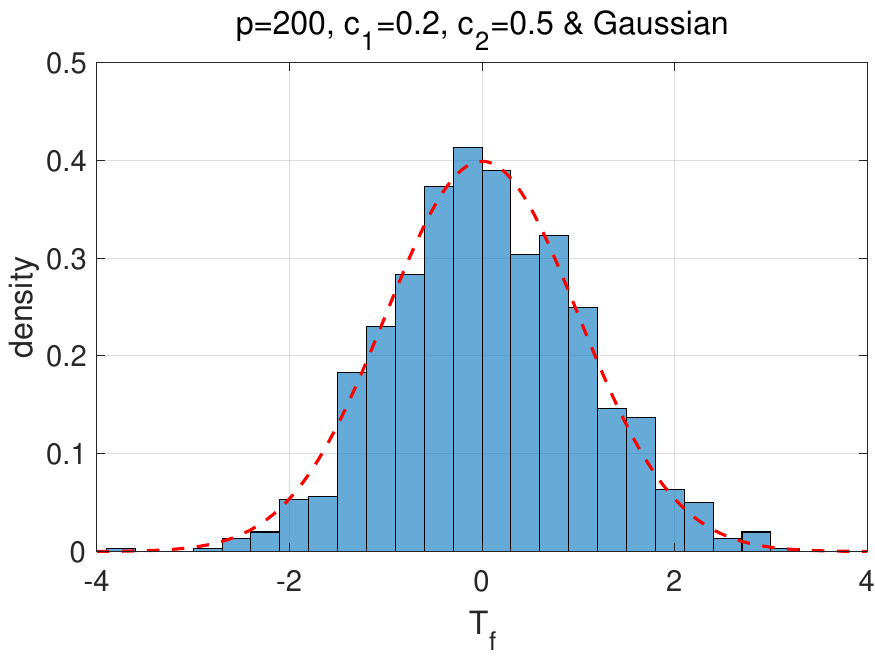}}
	
	\subfigure {\includegraphics[width=1.55in]{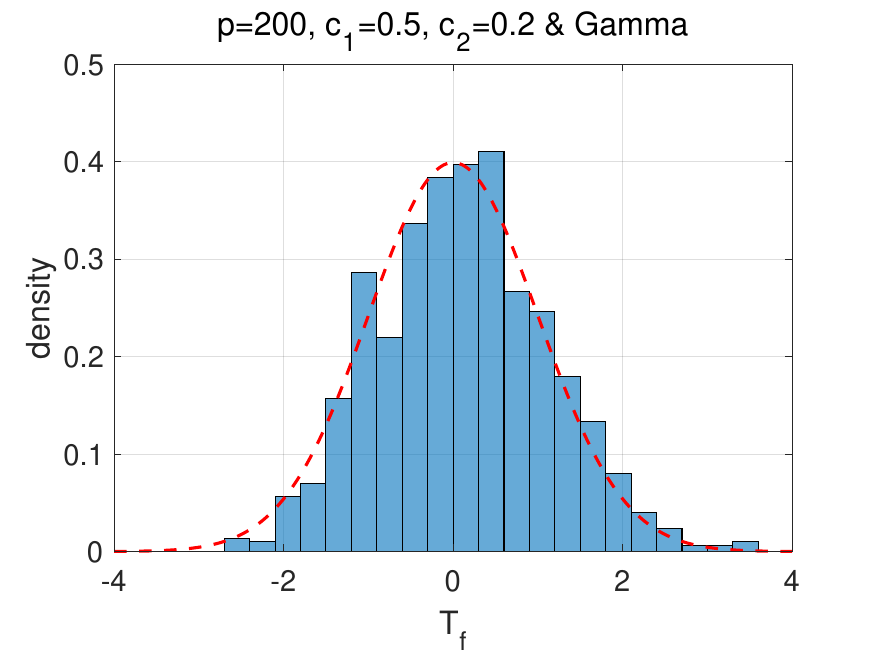}}
	\subfigure {\includegraphics[width=1.55in]{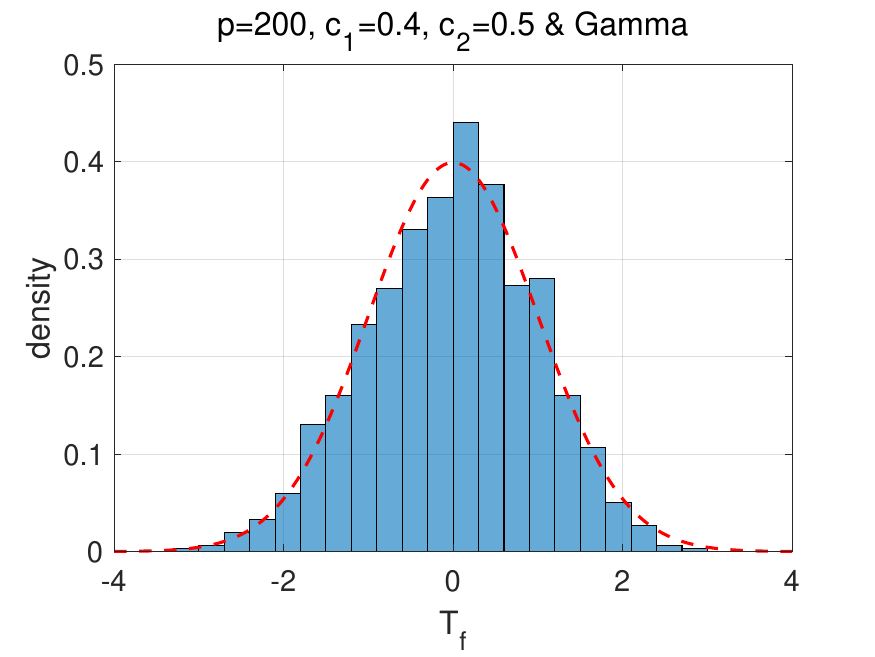}}
	\subfigure {\includegraphics[width=1.55in]{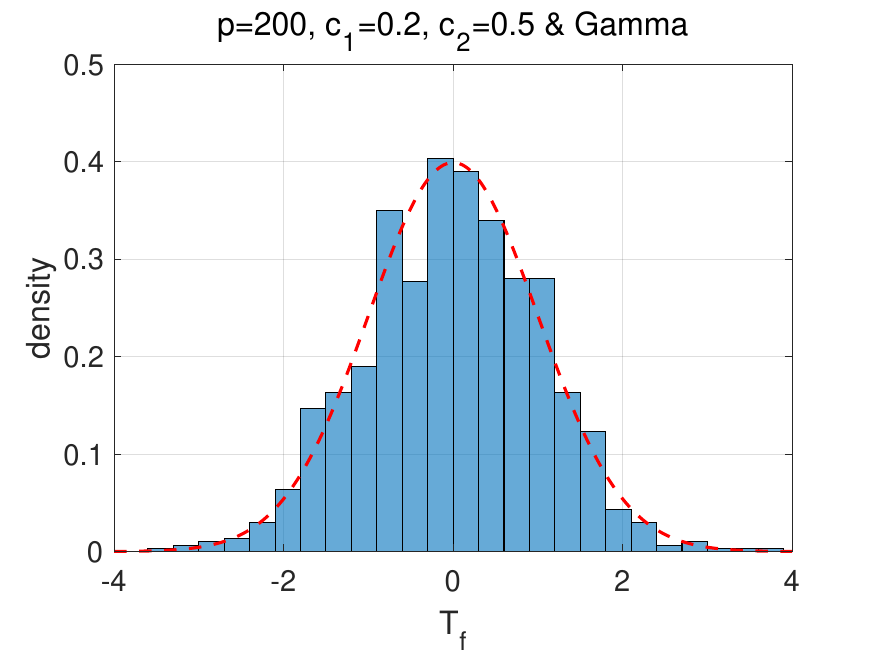}}
	\vspace{-1em}
	\caption{Empirical distribution of $T_{f}$ under the null in Model 2 when  $f(x)=\log x$. }
	\label{fig:M2_log}
\end{figure}

\subsection{Numerical studies for large-dimensional linear regression}

For the hypothesis \eqref{H2} in the large-dimensional linear regression model, we  generate data from the following form:
\begin{align*}
	\mathbf{z}_{i}=\mathbf{B} \mathbf{w}_{i}+\boldsymbol\epsilon_{i}, \quad i=1,2, \ldots, n,
\end{align*}
where the entries of regression variables  $\mathbf{w}_{i} $ are i.i.d. drawn from $\mathcal{N}(0, 1) $, the regression coefficient matrix  $\mathbf{B}$ is decomposed into $(\mathbf B_1, \mathbf B_2)$,  $\mathbf B_1=(\mathbf b_1, \mathbf b_2, \mathbf b_3, \mathbf b_4,\\ \mathbf b_5, \mathbf 0, \mathbf0,\cdots, \mathbf0)$ with the entries of $\mathbf b_i  (i=1,2,\cdots, 5)$ being i.i.d. from $\mathcal{N}(0, 1)$, and $\mathbf B_2=\mathbf 0$. Thus, $M_0=5$.
The error  $\boldsymbol\epsilon_i$ is a $p \times 1$ random vector drawn from  $\mathcal{N}_{p}(\mathbf 0,\mathbf V)$, where the population covariance matrix $\mathbf V$  is designed as the following two models.\\
\textbf{Model 3:} Assuming that the matrix $\mathbf V$ is an identity matrix.\\
\textbf{Model 4:} Assuming that the matrix $\mathbf V$ is in the form of a Toeplitz matrix defined as
$$
\mathbf V=\left(\begin{array}{ccccc}
	1 & \rho & \rho^{2} & \cdots & \rho^{p-1} \\
	\rho & 1 & \rho & \cdots & \rho^{p-2} \\
	\cdots & \cdots &\cdots &\cdots &\cdots \\
	\rho^{p-1} & \rho^{p-2} & \cdots & \rho & 1
\end{array}\right),
$$
where $\rho=0.9$.

In each scenario,  the value of $c_1$  is set to $ 0.5, 1.5$, and  $c_2$ is set to $0.2, 0.5, 0.8$. 
Reported in Tables \ref{linear1} and \ref{linear2} are  the empirical sizes and powers of our proposed test statistic $T_{l}$ in \eqref{test_linear} rejecting the null with 2,000 replications at a significance level $\alpha = 0.05$, for Models 3 and 4, respectively.

\begin{table}[htbp]
	\centering
	\caption{Empirical size of rejecting the null in Model 3.}
	\vskip-.1cm
	\smallskip
	\small
	\renewcommand\arraystretch{1.2}
	\setlength{\tabcolsep}{0.5mm}
	\begin{tabular}{lllllll}
		\hline
		& \multicolumn{1}{l}{$M_0=1$} & \multicolumn{1}{l}{$M_0=2$} & \multicolumn{1}{l}{$M_0=3$} & \multicolumn{1}{l}{$M_0=4$} & \multicolumn{1}{l}{$M_0=5$} & \multicolumn{1}{l}{$M_0=6$}  \\
		\hline
		$p=40, n=300, r=100, r_1=80$ & 1  &1 &1 &1& \textbf{0.0365}&0.7450  \\
		$p=80, n=600, r=200, r_1=160$ & 1  &1 &1  &1  & \textbf{0.0395}& 0.7895\\
		$p=160, n=1200, r=400, r_1=320$ & 1  &1 &1  &1  & \textbf{0.0515}&0.8125\\
		\hline
		$p=50, n=300, r=200, r_1=100$ & 1  &1  &1 &0.9995 &\textbf{0.0360}& 0.7710\\
		$p=100, n=600, r=400, r_1=200$ & 1  &1 &1&1 &\textbf{0.0430}& 0.8055\\
		$p=200, n=1200, r=800, r_1=400$ & 1  &1 &1 &1&\textbf{0.0515}& 0.8305\\
		\hline
		$p=80, n=300, r=200, r_1=160$ & 1  &1   & 1&0.9790 &\textbf{0.0365}& 0.8245 \\
		$p=160, n=600, r=400, r_1=320$ & 1  &1  &1  &0.9955& \textbf{0.0405}&0.8580\\
		$p=320, n=1200, r=800, r_1=640$ &1 &1 &1 &0.9995 &\textbf{0.0480} & 0.8360\\
		\hline
		$p=60, n=350, r=50, r_1=40$ & 1  &1   & 1&1 & \textbf{0.0390}&0.6830\\
		$p=120, n=700, r=100, r_1=80$ & 1  &1  &1  &1& \textbf{0.0395}& 0.7700 \\
		$p=240, n=1400, r=200, r_1=160$ & 1  &1   &1 &1& \textbf{0.0460}&0.7650 \\
		\hline
		$p=90, n=350, r=170, r_1=60$ & 1  &1   & 1&1 & \textbf{0.0370}& 0.7555\\
		$p=180, n=700, r=340, r_1=120$ & 1  &1  &1  &1& \textbf{0.0420}& 0.7810 \\
		$p=360, n=1400, r=680, r_1=240$ & 1  &1   &1 &1& \textbf{0.0480}&0.7850\\
		\hline
		$p=120, n=350, r=200, r_1=80$ & 1  &1   & 1&1 & \textbf{0.0375}& 0.7965\\
		$p=240, n=700, r=400, r_1=160$ & 1  &1  &1  &1& \textbf{0.0420}& 0.8250\\
		$p=480, n=1400, r=800, r_1=320$ & 1  &1   &1 &1& \textbf{0.0515}&0.8300\\
		\hline
	\end{tabular}%
	\label{linear1}%
\end{table}%

\begin{figure}[htbp]
	\centering
	\subfigure{\includegraphics[width=1.55in]{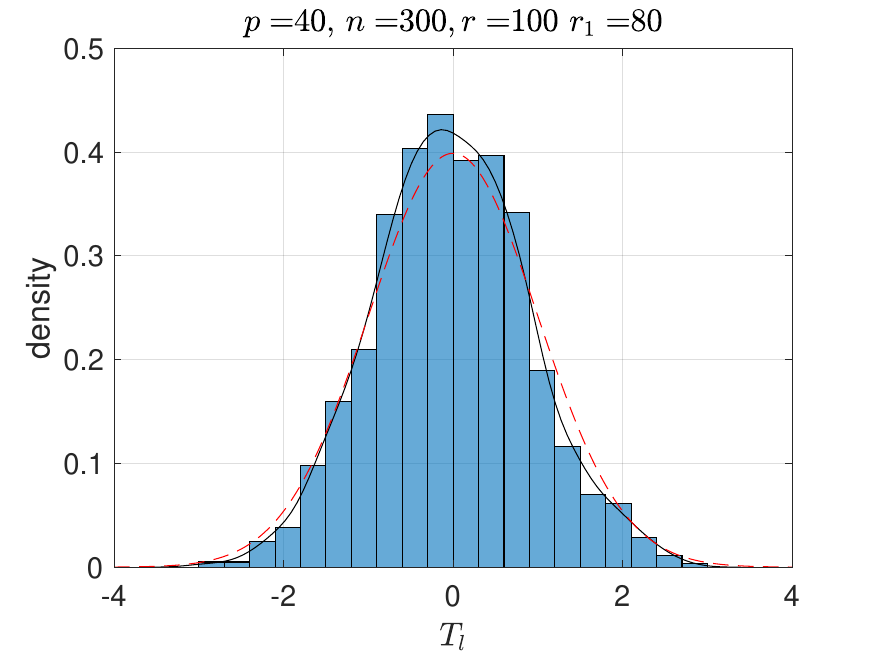}}
	\subfigure {\includegraphics[width=1.55in]{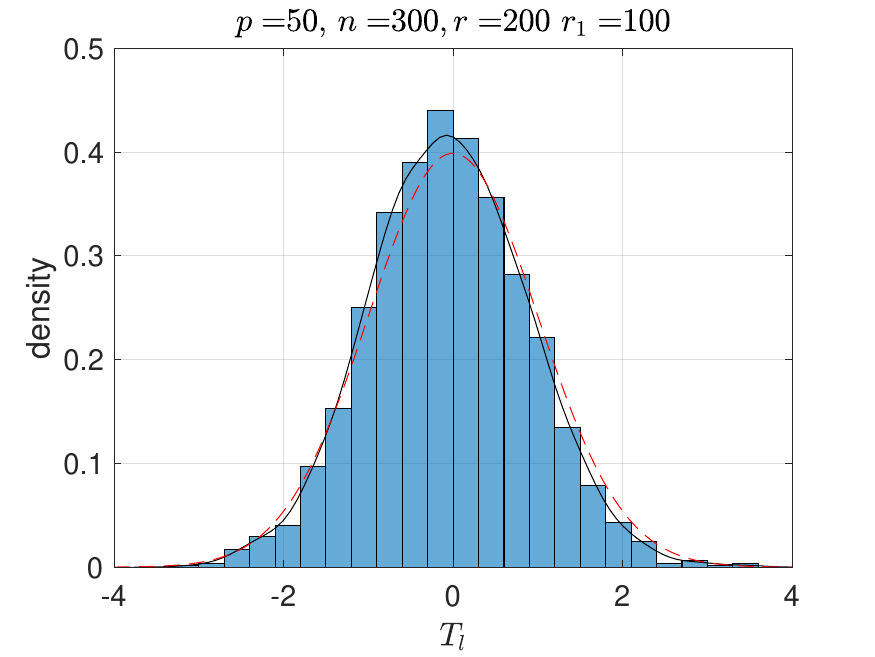}}
	\subfigure {\includegraphics[width=1.55in]{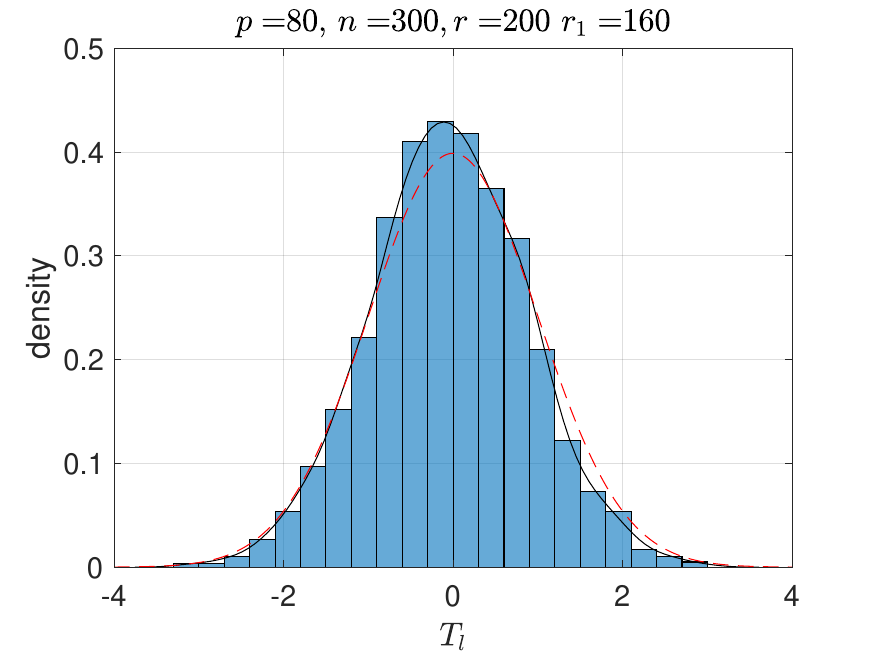}}
	
	\subfigure {\includegraphics[width=1.55in]{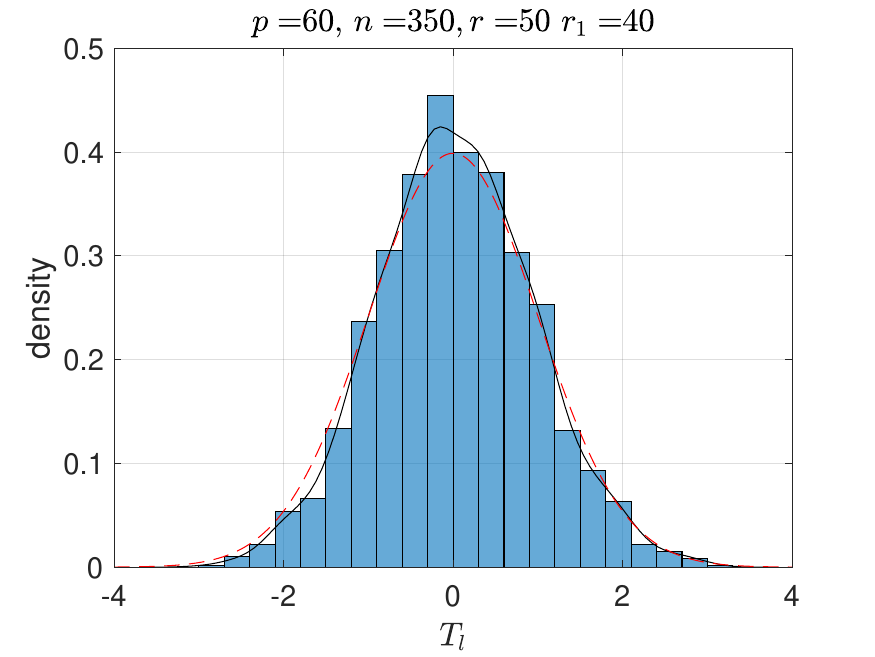}}
	\subfigure {\includegraphics[width=1.55in]{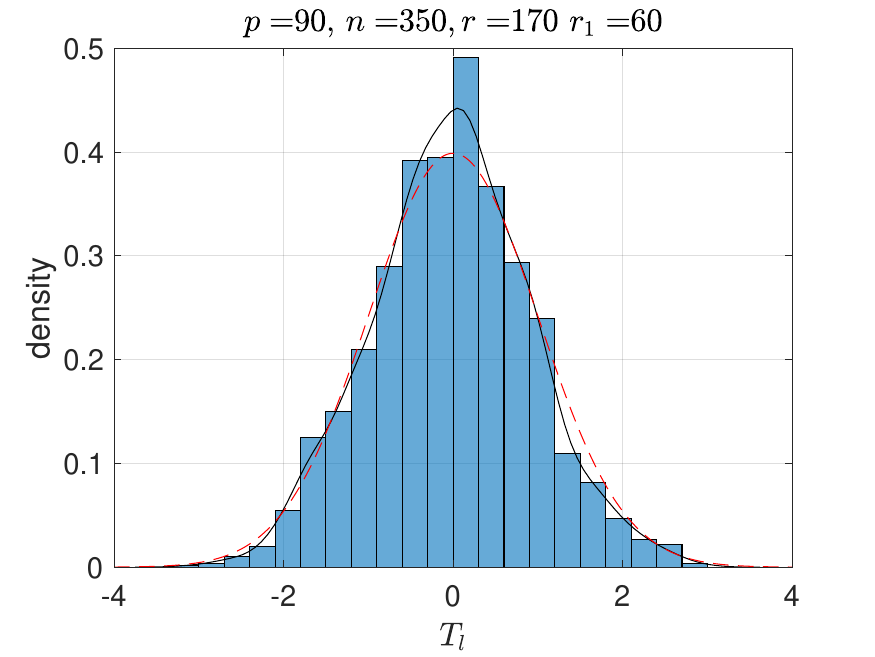}}
	\subfigure {\includegraphics[width=1.55in]{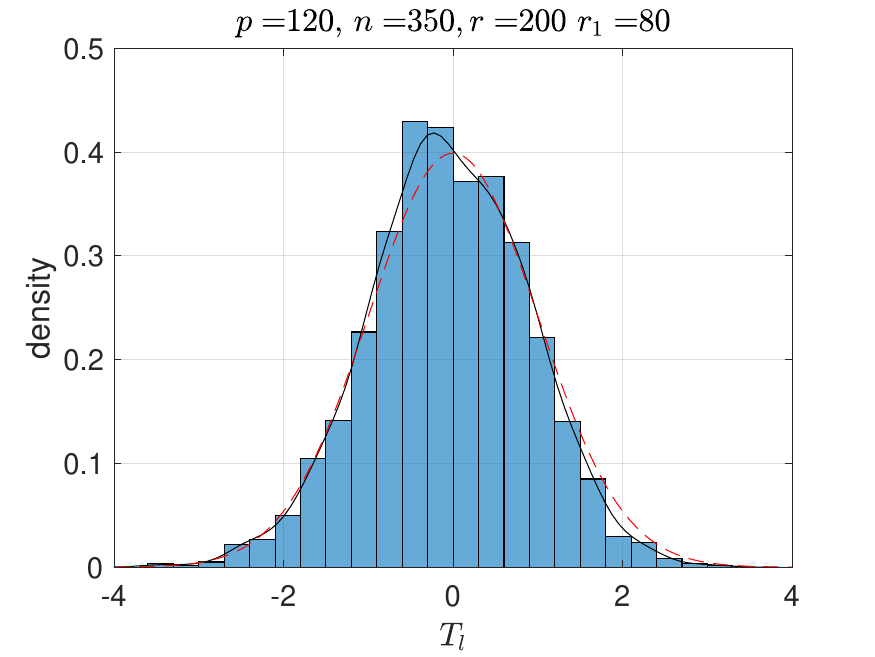}}
	\caption{Empirical distribution of $T_{l}$ under the null in Model 3. }
	\label{fig:M3}
\end{figure}

\begin{table}[htbp]
	\centering
	\vspace{-0.5em}
	\caption{Empirical size of rejecting the null  in Model 4.}
	\vskip-.1cm
	\smallskip
	\small
	\renewcommand\arraystretch{1.2}
	\setlength{\tabcolsep}{0.5mm}
	\begin{tabular}{lllllll}
		\hline
		Values of $M_0$& \multicolumn{1}{l}{$M_0=1$} & \multicolumn{1}{l}{$M_0=2$} & \multicolumn{1}{l}{$M_0=3$} & \multicolumn{1}{l}{$M_0=4$} & \multicolumn{1}{l}{$M_0=5$} & \multicolumn{1}{l}{$M_0=6$}  \\
		\hline
		$p=40, n=300, r=100, r_1=80$ & 1  &1 &1 &1& \textbf{0.0340}&0.7410  \\
		$p=80, n=600, r=200, r_1=160$ & 1  &1 &1  &1  & \textbf{0.0440}& 0.7890\\
		$p=160, n=1200, r=400, r_1=320$ & 1  &1 &1  &1  & \textbf{0.0465}&0.7925\\
		\hline
		$p=50, n=300, r=200, r_1=100$ & 1  &1  &1 &1 &\textbf{0.0380}& 0.7890\\
		$p=100, n=600, r=400, r_1=200$ & 1  &1 &1&1 &\textbf{0.0425}& 0.8040\\
		$p=200, n=1200, r=800, r_1=400$ & 1  &1 &1 &1&\textbf{0.0550}& 0.814\\
		\hline
		$p=80, n=300, r=200, r_1=160$ & 1  &1   & 1&0.9995 &\textbf{0.0360}& 0.8315 \\
		$p=160, n=600, r=400, r_1=320$ & 1  &1  &1  &1& \textbf{0.0415}&0.8515\\
		$p=320, n=1200, r=800, r_1=640$ &1 &1 &1 &1 &\textbf{0.0520} & 0.8500\\
		\hline
		$p=60, n=350, r=50, r_1=40$ & 1  &1   & 1&1 & \textbf{0.0350}&0.7040\\
		$p=120, n=700, r=100, r_1=80$ & 1  &1  &1  &1& \textbf{0.0440}& 0.7525 \\
		$p=240, n=1400, r=200, r_1=160$ & 1  &1   &1 &1& \textbf{0.0450}&0.7825\\
		\hline
		$p=90, n=350, r=170, r_1=60$ & 1  &1   & 1&1 & \textbf{0.0345}& 0.7525\\
		$p=180, n=700, r=340, r_1=120$ & 1  &1  &1  &1& \textbf{0.0345}& 0.7770 \\
		$p=360, n=1400, r=680, r_1=240$ & 1  &1   &1 &1& \textbf{0.0475}&0.7920\\
		\hline
		$p=120, n=350, r=200, r_1=80$ & 1  &1   & 1&1 & \textbf{0.0340}& 0.7980\\
		$p=240, n=700, r=400, r_1=160$ & 1  &1  &1  &1& \textbf{0.0410}& 0.8130\\
		$p=480, n=1400, r=800, r_1=320$ & 1  &1   &1 &1& \textbf{0.0445}&0.8355\\
		\hline
	\end{tabular}%
	\label{linear2}%
\end{table}%
\begin{figure}[htbp]
	\vspace{-1em}
	\centering
	\subfigure{\includegraphics[width=1.55in]{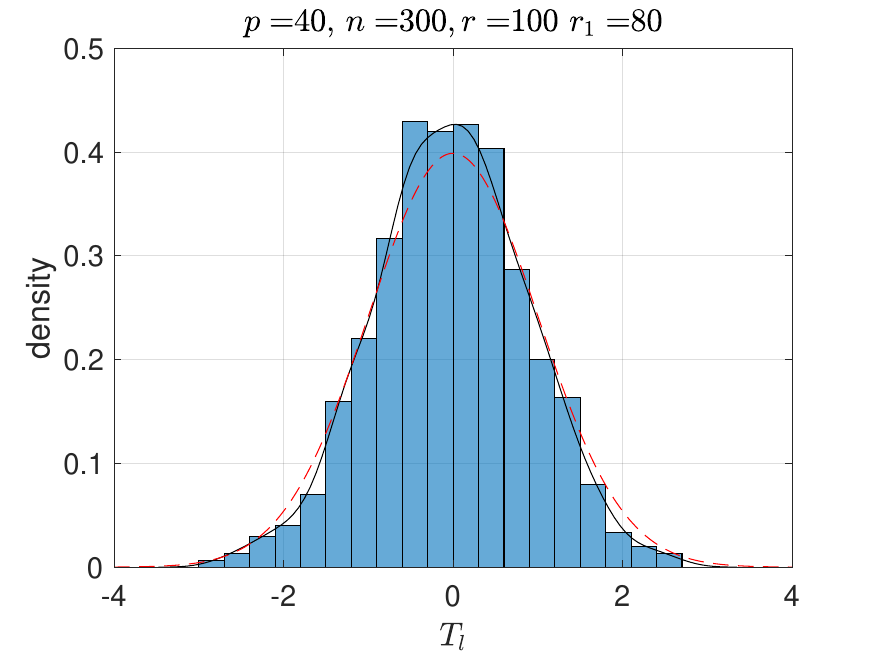}}
	\subfigure {\includegraphics[width=1.55in]{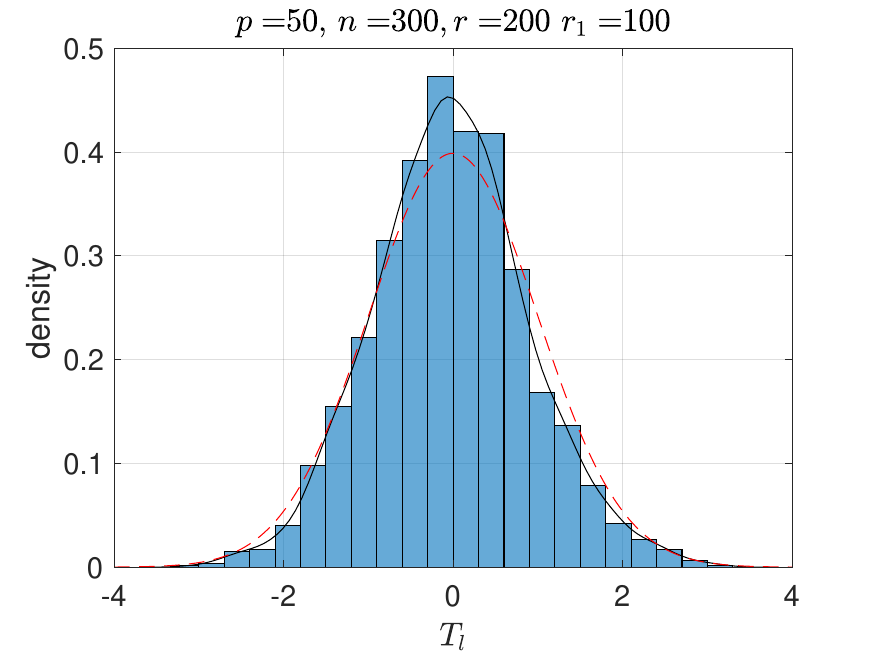}}
	\subfigure {\includegraphics[width=1.55in]{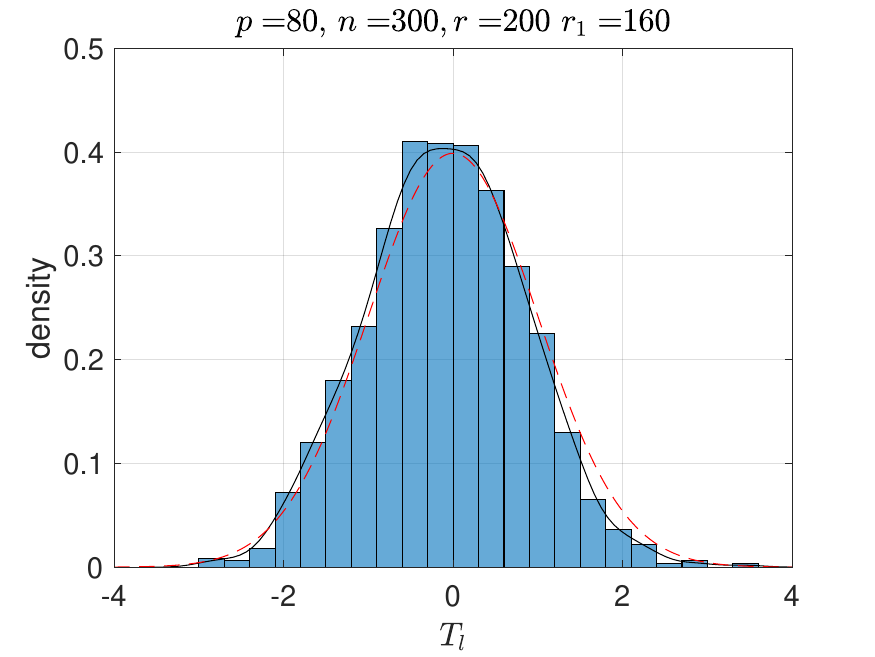}}
	
	\subfigure {\includegraphics[width=1.55in]{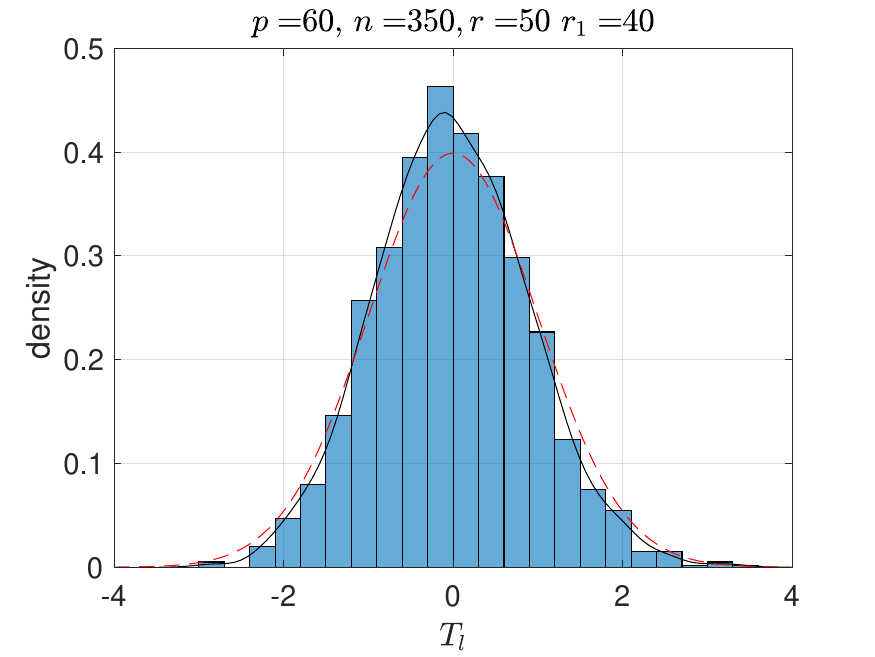}}
	\subfigure {\includegraphics[width=1.55in]{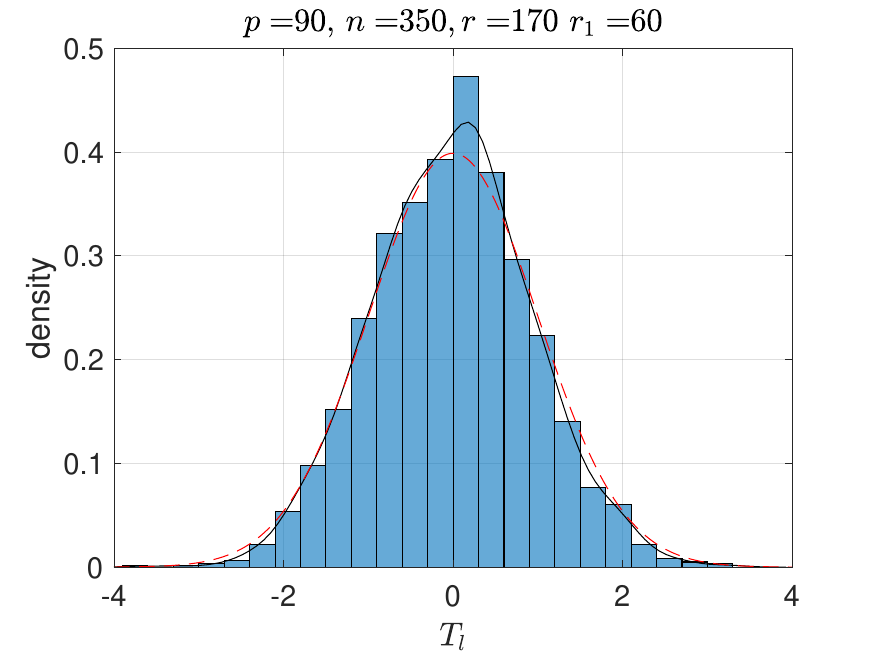}}
	\subfigure {\includegraphics[width=1.55in]{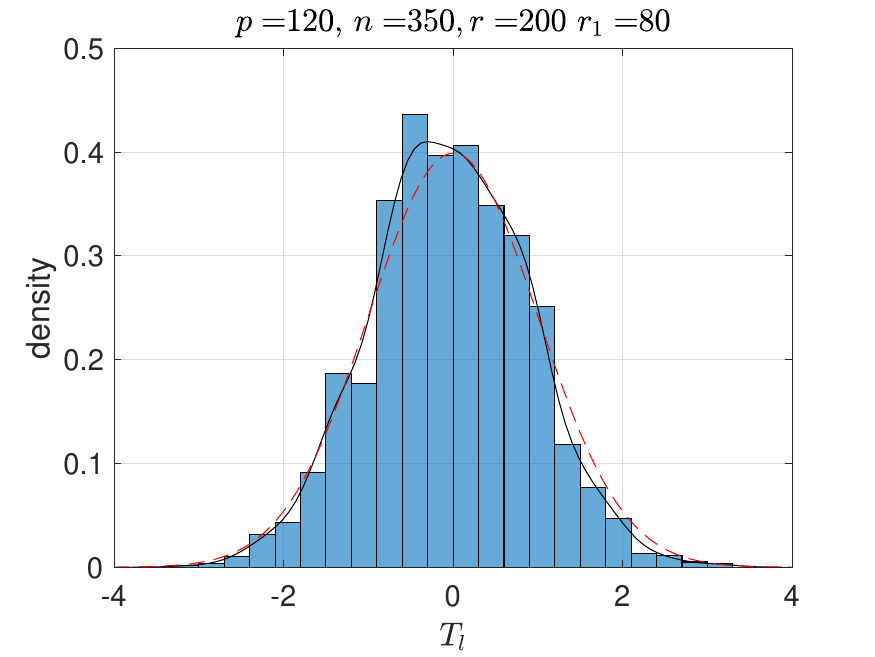}}
	\caption{Empirical distribution of $T_{l}$ under the null in Model 4. } \vspace{-1em}
	\label{fig:M4}
\end{figure}

As shown in the tables, the empirical sizes of our proposed test can  consistently achieve  correct empirical sizes around  0.05 for all models.
Moreover, as the distance between the alternative and null hypotheses increases, so does the power of the test. Figures \ref{fig:M3} and \ref{fig:M4}  depict the simulated empirical distributions
of our proposed test statistic when  the null hypothesis is true in several cases in Models 3 and 4. We can find that the empirical distribution of the statistic fits well with the standard normal distribution.
Moreover, note that the parameter $\rho$ defined in Model 4  represents the degree of correlation among the $p$ components of error vectors,  where $\rho=0$ corresponds to the case of Model 3. 
Models 3 and 4 represent scenarios where the errors are completely uncorrelated or strongly correlated. The results indicate that  our test method can provide good sizes irrespective of the correlations among the coordinates of the error vectors.
In summary,  our proposed  test method  can accurately determine the number of significant variables in different scenarios.

\subsection{Numerical studies for change point detection }\label{sec5.3}
For the change point detection problem, we conduct simulations to compare our proposed method with three other methods from the existing literature, which are referred to as M-P, MSR, and SVM, described as follows.
\begin{itemize}
	\item[(a).] \textbf{M-P:} \cite{MP2019} finds the change point by judging whether the largest eigenvalue of the sample covariance matrix falls outside the support set of the standard Mar\v{c}enko-Pastur law.
	\item[(b).] \textbf{MSR:} \cite{MSR2020}  detects the change point by judging whether the mean spectral radius (MSR) calculated from the sample data is smaller than the inner ring radius of the single ring law.
	\item[(c).] \textbf{SVM:}  With the classifications of normal data and abnormal data, \cite{SVM2015} use training data to obtain Support Vector Machines (SVM) model, and predict whether there is an abnormality in the test data.
\end{itemize}
Similar to our change point detection method,  when anomalies are detected in $s$ consecutive windows using these three methods, it is considered that a change point has been detected.

For the data generation, we consider the following two models.\\
\textbf{Model 5:} $\mathbf{x}_t \stackrel{\text { i.i.d. }}{\sim} \mathcal{N}_{p}(\boldsymbol\mu, \mathbf I_p)$ for $1 \leq t \leq\lfloor  2T/3\rfloor$, $\mathbf{x}_t \stackrel{\text { i.i.d. }}{\sim} \mathcal{N}_{p}(\boldsymbol\mu, \rho_1\mathbf I_p)$ for $\lfloor 2T/3\rfloor+1 \leq t \leq T$, where $\boldsymbol\mu=(0.6, 0.6,\ldots, 0.6) \in \mathbb{R}^p$.\\
\textbf{Model 6:} $\mathbf{x}_t=\mathbf A\mathbf F_t+\mathbf e_t$, where $\mathbf A$ is a $p\times 5$ factor loading matrix with each element  generated from a uniform
distribution $U(0.5,1.5)$, the factors $\mathbf F_t\stackrel{\text { i.i.d. }}{\sim} \mathcal{N}_{5}(\mathbf  0, \mathbf I_5)$ and the  error terms $\mathbf e_t\stackrel{\text { i.i.d. }}{\sim} \mathcal{N}_{p}(\mathbf 0, \mathbf \Sigma_e)$. Here, $\mathbf \Sigma_e=\mathbf I_p$ for  $1 \leq t \leq\lfloor  2T/3\rfloor$, and $\mathbf \Sigma_{e}=(\sigma_{ij})_{i,j=1}^p$ with $\sigma_{ij}=\rho_20.8^{|i-j|}$ for $\lfloor  2T/3\rfloor+1\leq t \leq T$. 

In both models, the generated data have a length of $T=6000$. Within these 6000 observations, two additive outliers are introduced at $t=2001$ and $t=2002$,  with  values of  $20 (\mathbf{1}_p, \mathbf{1}_p)$. Moreover,  each model  involves a parameter, $\rho_1$ or $\rho_2$, which can reflect  the magnitude of the distributional change. For instance,  in Model 5,  the larger the $\rho_1$, the larger the difference between the covariance matrices before and after the time $2T/3$. We set $\rho_1=5, 10, 20$ in Model 5, and $\rho_2=3, 6,9$ in Model 6, respectively. For each scenario, 100 pieces of simulated data are generated, with  $s=20$, and $q_{11}=q_{12}=2p$. Accuracy, defined as the percentage of correctly detected change points,  is used to evaluate the detection performance. Furthermore, note that our change point detection method requires multiple tests, increasing the risk of false positives. While the Bonferroni correction (\cite{bonferroni1936}) is commonly used to adjust for this, it is too conservative. Instead, we empirically tested six significance levels ($\alpha=0.0001, 0.0005, 0.001, 0.005, 0.01, 0.05$) to find the one that yields around 95\% accuracy. Based on absolute error comparisons, we found that $\alpha=0.0005$ minimizes error and is thus selected for further simulations. More details about the selection of $\alpha$  can be found in the supplementary material.

\begin{figure}[!t]
	\centering
	\subfigure{\includegraphics[width=5in]{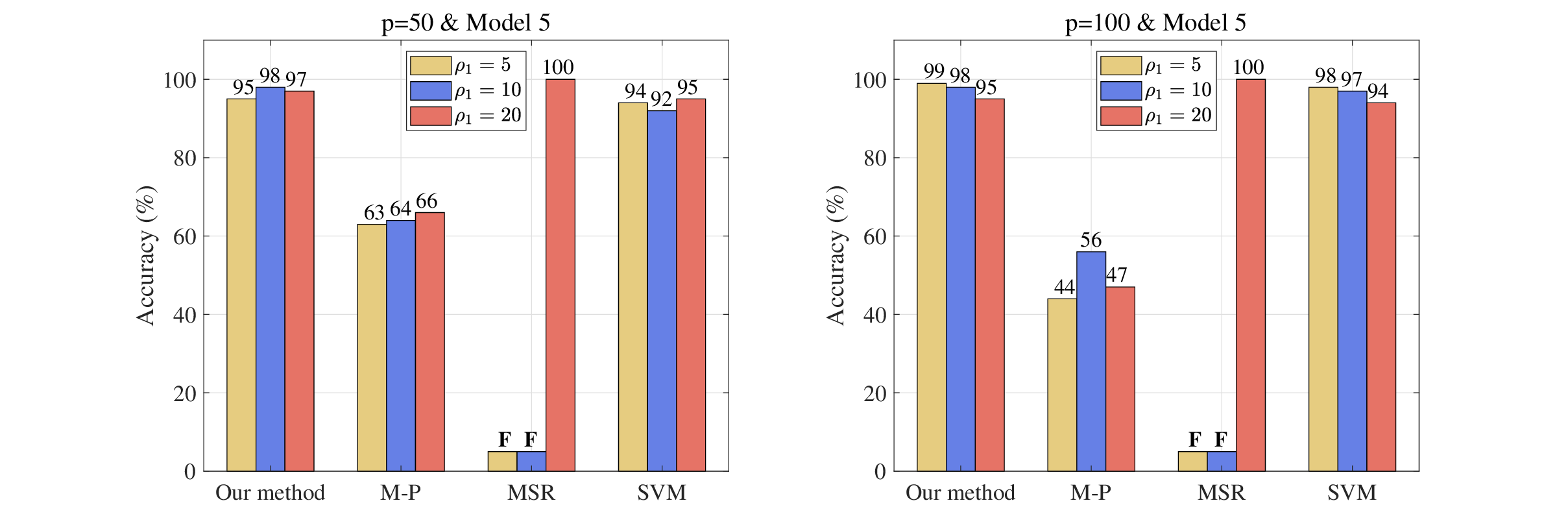}}
	\subfigure{\includegraphics[width=5in]{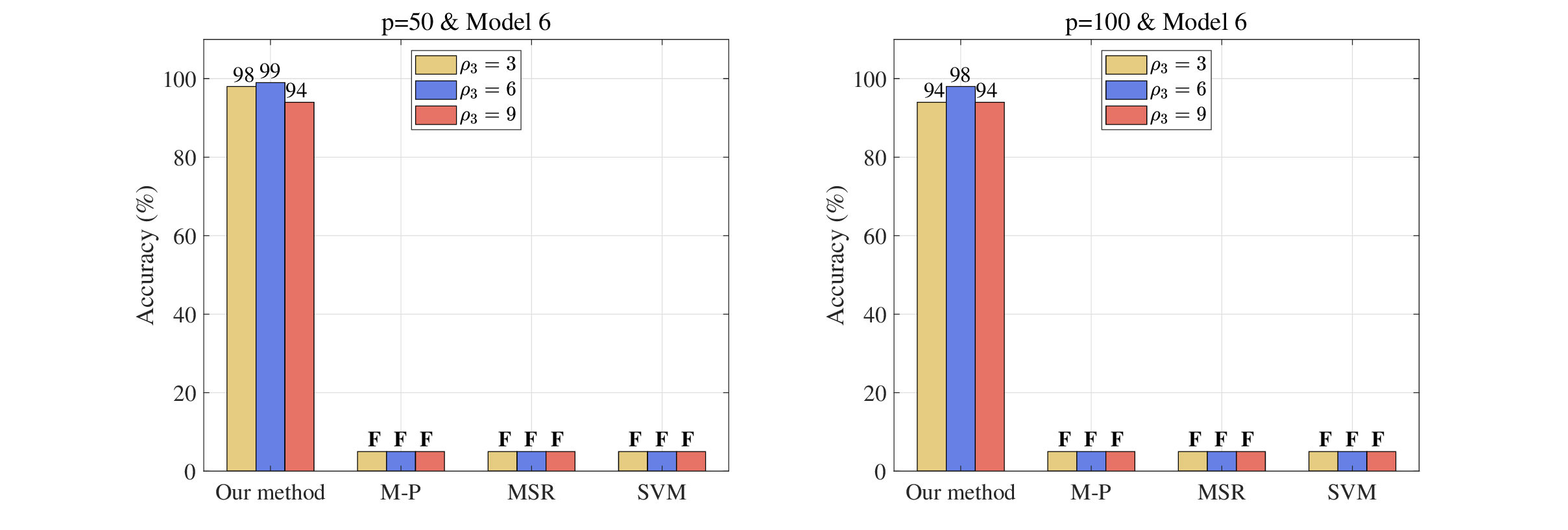}}
	\caption{Accuracy comparisons among different methods in Models 5 and 6.  The mark \rm{\textbf{F}} indicates that the method fails. }
	\label{change-point}
\end{figure}

We set the dimension $p$  to  50 and 100  for each  model and compute the accuracy values, which are reported in Figure \ref{change-point}.   
Overall, it is evident that our proposed method outperforms the other three.  Across various scenarios, the detection accuracy of our proposed test method reaches as high as about 95\%.
In contrast, the M-P method consistently demonstrates the lowest detection accuracy among both  models.
The MSR method exhibits strong performance only when the fluctuation of the change is substantial but fails when the fluctuation is minimal, such as in cases where $\rho_1=5$ and $\rho_1=10$ in Model 5. Furthermore, for the factor model setting in Model 6, both the M-P method and the MSR method fail, indicating that these methods are unsuitable for data with spiked structures. Whereas our method achieves high detection accuracy  in all cases across different models. 
Additionally, the SVM method performs  well in Model 5, achieving high accuracy. However, for Model 6 where the covariance matrix of the data exhibits a complex spiked structure, the SVM method also fails. Our method, on the other hand, continues to demonstrate high accuracy, indicating its robustness and suitability for data with complex structures.

\subsection{Empirical study}

In this section, we  evaluate the performance of our proposed change point detection method through an empirical analysis of a monthly  database for macroeconomic research (FRED-MD, \cite{fred}), which is available from  the website \url{https://research.stlouisfed.org/econ/mccracken/fred-databases/}.
The FRED-MD dataset comprises monthly series data for 128 macroeconomic variables spanning from January 1959 to June 2024. To ensure the data's suitability for analysis, we conduct several pre-processing steps: removing missing data, transforming the remaining data into a stationary form, and standardizing variables.
Following the pre-processing, the data dimension and sample size are adjusted to $p=103$ and $n=783$, respectively.

In Figure \ref{fred-MD}, we plot the changes of  the 103 features in the original data  over time. Notably, around 2008, several indicators experienced significant fluctuations, suggesting the occurrence of a change point during this period. The two most volatile indicators are the Reserves of Depository Institutions (NONBORRES) and the Monetary Base (BOGMBASE), reflecting some of the policy responses to the 2008 global financial crisis.
\begin{figure}[!t]
	\centering
	\includegraphics[width=4.5in]{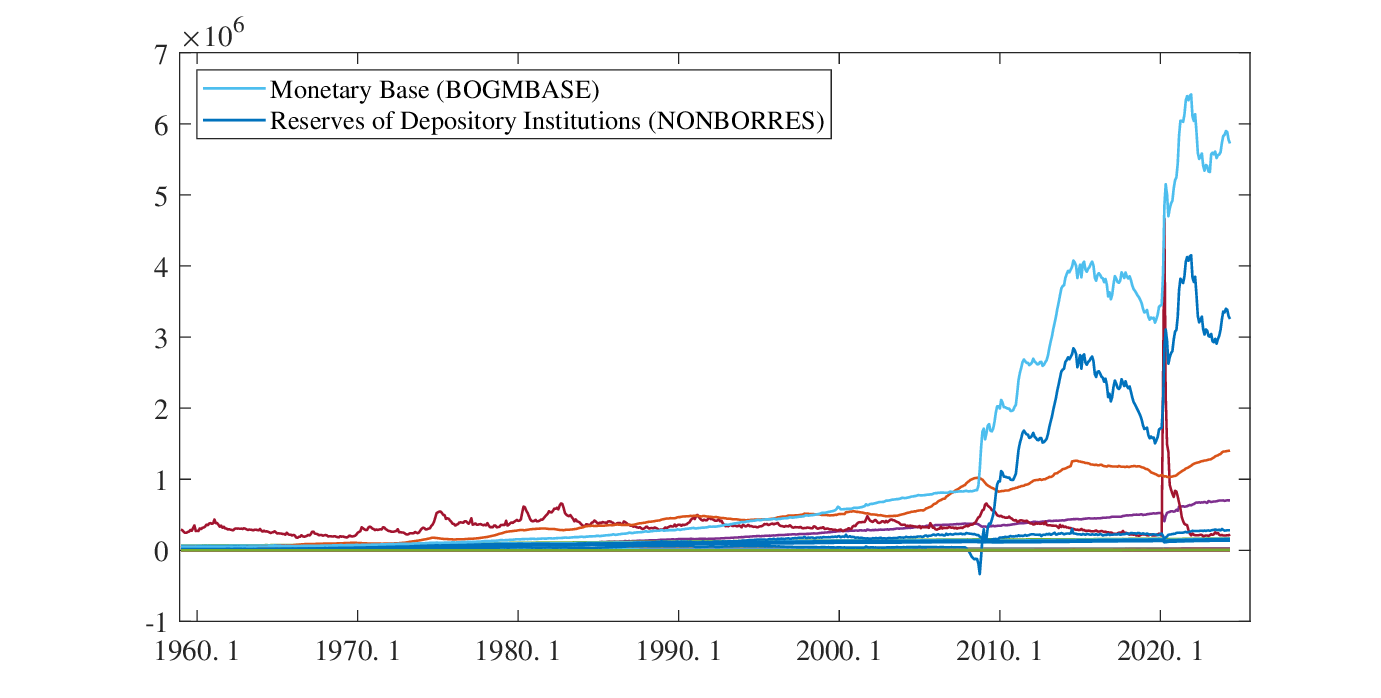}
	\caption{The monthly  database for macroeconomic research (FRED-MD). }
	\label{fred-MD}
\end{figure}

Due to the complex structure of the actual data and the challenge in satisfying the column independence condition, the asymptotic distribution of the test statistic  $T_j$ deviates to some extent from Corollary~\ref{Th3}. To address this, we refer to \cite{Chen2022} and use the upper 95\% quantile of the empirical distributions of  $T_j, j=1,2,...$ to adjust the original threshold of the rejection region.
For parameter settings, we chose $q_{11}=p+50$ and $q_{12}=p-10$, with $s$ set to 10. Change point detection was performed using Algorithm \ref{alg2}, and the results identified the change point at the 595th time point, corresponding to July 2008. This finding aligns with our analysis in Figure \ref{fred-MD}, validating the effectiveness of the proposed methods on  real-world data.

\section{Conclusion}\label{sec6}
This paper discussed the problems of testing the number of spikes in a generalized  spiked Fisher matrix.  We proposed a universal test for this hypothesis testing problem, considered the partial linear spectra statistic as the test statistic, and established its CLT under the null hypothesis. The theoretical results of our test method were applied to two problems, large-dimensional linear regression and change point detection.  The exceptional effectiveness and generality of our proposed methods were demonstrated through extensive numerical and empirical studies. Being independent of the population distribution and diagonal assumption of the population covariance matrix, our approaches  have superior performance. Furthermore, the problem of testing the spikes in a non-central spiked Fisher matrix is also important and thus warrants further research.


\appendix

\section{Proof of Theorem 2.1} \label{app1}
\begin{proof}
	We need to prove the CLT of the following partial linear spectral statistic	
	\begin{equation} 
	T=\sum\limits_{j=1}^p  f\{l_j(\bF)\}-\sum\limits_{j \in  \bbJ_k, k=1}^K  f\{ l_j(\bF)\}. \label{partial}
	\end{equation}
	
	We first focus on the calculation of  the second term $\sum\limits_{j \in  \bbJ_k, k=1}^K  f\{ l_j(\bF)\}$ in \eqref{partial}.
	It is  known that, for the sample spiked  eigenvalues of $\mathbf F$, almost surely,
	\begin{align}
		\sum_{j \in \mathcal{J}_{k}, k=1}^{K} f\{ l_j(\bF)\}\rightarrow \sum_{k=1}^{K} m_k  f\left(\psi_k\right),\label{as1}
	\end{align}
	where $\psi_k$ is defined in \eqref{psik}.
	
	For the first term in \eqref{partial}, we have
	\begin{align*}
		\sum\limits_{j=1}^p  f\{l_j(\bF)\}&=p \int f(x)\mathrm{d}F_{\mathbf{n}}(x)\\
		&=\!p\! \!\int\! f(x) \mathrm{d}\left[\!F_{\mathbf{n}}(x)-F^{(c_{n1},c_{n2},H_n)}(x)\!\right] \!+\!p \int\! f(x) \mathrm{d}F^{(c_{n1},c_{n2},H_n)}(x),
	\end{align*}
	where $F_{\mathbf{n}}$ is the ESD of the Fisher matrix $\bF$, and $F^{(c_{n1},c_{n2},H_n)}$ is the distribution function in which the parameter $(c_{1},c_{2},H)$ is replaced by $(c_{n1},c_{n2}, H_n)$ in the LSD of $\bF$.
	
	We consider the following  centered and scaled variables
	$$
	p \int f(x) \mathrm{d}\left[F_{\mathbf{n}}(x)-F^{(c_{n1},c_{n2},H_n)}(x)\right]=:G_n(x).
	$$
	Therefore, we have
	$$
	G_n(x)=\sum\limits_{j=1}^p  f\{l_j(\bF)\}-p \int f(x) \mathrm{d}F^{(c_{n1},c_{n2},H_n)}(x).
	$$
	
	Under different population assumptions, both of \cite{zheng2012} and \cite{zheng2017} proved that the process $G_n(x)$
	would converge weakly to a Gaussian vector  with explicit expressions of means and covariance functions when the dimensions are proportionally large compared to the sample sizes. The arrays of two random vectors  in the Fisher matrix considered by \cite{zheng2012} can be independent but differently distributed. \cite{zheng2017} further explored  general Fisher matrices with arbitrary	population covariance matrices,  their proposed  CLT extended and covered the conclusion of \cite{zheng2012}, and can be expressed as follows
	$$
	G_n(x) \Rightarrow \mathcal{N}\left(\mu_{f,H},  \nu_{f,H}\right),
	$$
	where
	\begin{align}
		\mu_{ f,H}=&
		\frac{q}{4\pi i} \oint f(z) \md \log \bigg[
		\frac{h^2}{c_2}- \frac{c_1}{c_2} \frac{\big\{1-c_2\int \frac{m_0(z)}{t+m_0(z)} \md  H(t)\big\}^2}{1-c_2\int \frac{m^2_0(z)}{\big\{t+m_0(z)\big\}^2} \md  H(t)}
		\bigg] \non
		&\ +\frac{q}{4\pi i} \oint f(z) \md \log \bigg[
		1- c_2 \int  \frac{m^2_0(z) \md  H(t)}{\big\{t+m_0(z)\big\}^2}\bigg] \non 
		&\ -\frac{\beta_x c_1 }{2 \pi i} \oint f(z)\frac{z^2\underline{m}^3(z)h_{m_1}(z)}{\frac{h^2}{c_2}- \frac{c_1}{c_2} \frac{\big\{1-c_2\int \frac{m_0(z)}{t+m_0(z)} \md  H(t)\big\}^2}{1-c_2\int \frac{m^2_0(z)}{\big\{t+m_0(z)\big\}^2} \md  H(t)}} \md z\non
		&\ +\frac{\beta_y c_2 }{2 \pi i} \oint f(z)\um'(z)
		\frac{\underline{m}^3(z)m^3_0(z)h_{m_2}\{-\um^{-1}(z)\}}
		{1-c_2\int \frac{m^2_0(z)}{\big\{t+m_0(z)\big\}^2} \md  H(t)}
		\md z,\label{mean}\\
		\nu_{ f,H}=&-\frac{q+1}{4\pi^2}\oint\oint\frac{f(z_1)f(z_2)}{\{m_0(z_1)-m_0(z_2)\}^2}
		\md m_0(z_1)\md m_0(z_2) \non
		&-\frac{\beta_{x} c_{1}}{4 \pi^{2}} \oint \oint f\left(z_{1}\right) f\left(z_{2}\right) \frac{\partial^{2}\left[z_{1} z_{2} \underline{m}\left(z_{1}\right) \underline{m}\left(z_{2}\right) h_{v_1}\left(z_{1}, z_{2}\right)\right]}{\partial z_{1} \partial z_{2}} \md z_{1} \md z_{2} \non
		&\ -\frac{\beta_{y} c_{2}}{4 \pi^{2}} \oint \oint f\left(z_{1}\right) f\left(z_{2}\right) \non
		& \quad\cdot \frac{\partial^{2}\left[ \underline{m}\left(z_{1}\right)\! m_{0}\left(z_{1}\right) \! \underline{m}\left(z_{2}\right)\! m_{0}\left(z_{2}\right)\! h_{v_{2}}\left(- \underline{m}^{-1}\left(z_{1}\right),\!- \underline{m}^{-1}\left(z_{2}\right)\right)\right]}{\partial z_{1} \partial z_{2}} \md z_{1} \md z_{2}. \label{var}
	\end{align}	
Here, $\beta_x={\rE|x_{11}|^4-q-2}$ and $\beta_y={\rE|y_{11}|^4-q-2}$ with $q=1$ for real case and 0 for complex, 
\begin{align}
	&h_{m_1}(z)=-\frac{\frac{m_{0}^{2}(z)}{z^{2} \underline m(z)} \int \frac{t^{2}}{\left(t+m_{0}(z)\right)^{3}} \md H(t)}{1-c_{2} \int \frac{m_{0}^{2}(z)}{\left(t+m_{0}(z)\right)^{2}} \md H(t)}, \notag\\
	&h_{m_2}\left(-\underline{m}^{-1}(z)\right)=-\frac{1}{\underline{m}^{3}(z)} \int \frac{t}{\left(t+m_{0}(z)\right)^{3}} \md H(t), \notag\\
	&h_{v_1}\left(z_{1}, z_{2}\right)=\frac{1}{z_{1} z_{2} \underline{m}\left(z_{1}\right) \underline{m}\left(z_{2}\right)} \int \frac{t^{2}}{\left(t+m_{0}\left(z_{1}\right)\right)\left(t+m_{0}\left(z_{2}\right)\right)} \md H(t), \notag\\
	& h_{v_2}\left(-\underline{m}^{-1}\left(z_{1}\right),-\underline{m}^{-1}\left(z_{2}\right)\right)
	=\frac{1}{\underline{m}\left(z_{1}\right) \underline{m}\left(z_{2}\right)} \int \frac{1}{\left(t+m_{0}\left(z_{1}\right)\right)\left(t+m_{0}\left(z_{2}\right)\right)} \md H(t). \notag
\end{align}
The contours all contain the support of the LSD of  $\bF$ and 
are non-overlapping  in   \eqref{mean} and \eqref{var}.

	Then 
	$$G_n(x)-\sum\limits_{j \in  \bbJ_k, k=1}^K  f\{ l_j(\bF)\}+\sum_{k=1}^{K} m_k  f\left(\psi_k\right)\Rightarrow \mathcal{N}\left(\mu_{f,H},  \nu_{f,H}\right),$$
	which is equivalent to
		\begin{align*}
		\nu_{f,H}^{-\frac{1}{2}}\!\left\{G_n(x)-\!\!\sum\limits_{j \in  \bbJ_k, k=1}^K  f\{ l_j(\bF)\}
		\!+\!\sum_{k=1}^{K} m_k  f\left(\psi_k\right)\!- \!\mu_{f,H}\!\right\} \Rightarrow \mathcal{N} \left( 0,
		1\right).
	\end{align*}
	Proof is completed.
\end{proof}

\begin{acks}[Acknowledgments]
The authors would like to thank the anonymous referees, the Associate Editors,
and the Editor for their constructive comments  of this paper.
\end{acks}

\begin{funding}
The second author was   supported by Key technologies for coordination and interoperation of power distribution service
resource Grant No. 2021YFB2401300, and National Natural Science Foundation of China Grant No. 12326606.
\end{funding}

\begin{supplement}
Supplementary materials  contain an algorithm to approximate the center parameter term  $d_1(f,H_n) $ in Theorem \ref{Th1},  calculation details of Examples \ref{ex1} and \ref{ex2}, an illustration about  properties of the matrix $\mathbf H$ in Section \ref{sec3},  the additional simulation results for Section \ref{sec5.1}, and  details of the selection method of the significance level $\alpha$ in Section \ref{sec5.3}.
\end{supplement}

\bibliographystyle{imsart-number}
\bibliography{myref}

\end{document}